\newtheorem{theorem}{Theorem}[section]
\newtheorem{proposition}{Proposition}[section]
\newtheorem{lemma}{Lemma}[section]
\newtheorem{remark}{Remark}[section]
\newtheorem{definition}{Definition}[section]
\newcommand{\eps}{\varepsilon}
\newcommand{\deb}{\rightharpoonup}
\newcommand{\To}{\longrightarrow}
\newcommand{\norm}[1]{\left\Vert#1\right\Vert}
\newcommand{\be} {\begin{equation}}
\newcommand{\ee} {\end{equation}}
\newcommand{\bea} {\begin{eqnarray}}
\newcommand{\eea} {\end{eqnarray}}
\newcommand{\Bea} {\begin{eqnarray*}}
\newcommand{\Eea} {\end{eqnarray*}}
\newcommand{\pa} {\partial}
\newcommand{\al} {\alpha}
\newcommand{\ba} {\beta}
\newcommand{\de} {\delta}
\newcommand{\na}{\nabla}
\newcommand{\ga} {\gamma}
\newcommand{\Ga} {\Gamma}
\newcommand{\Om} {\Omega}
\newcommand{\om} {\omega}
\newcommand{\De} {\Delta}
\newcommand{\la} {\lambda}
\newcommand{\no} {\nonumber}
\newcommand{\noi} {\noindent}
\newcommand{\lab} {\label}
\newcommand{\va} {\varphi}
\newcommand{\f}{\frac}
\newcommand{\R}{\mathbb R}
\newcommand{\Rn}{\mathbb R^N}
\newcommand{\Iom}{\int_{\Omega}}
\makeatletter \@addtoreset{equation}{section} \makeatother
\begin{document}

\title[Profile of solutions of nonlocal equations]{Profile of solutions for nonlocal equations with critical and supercritical nonlinearities}

\author{Mousomi Bhakta, Debangana Mukherjee}
\address{M. Bhakta, Department of Mathematics, Indian Institute of Science Education and Research, Dr. Homi Bhaba Road, Pune-411008, India}
\email{mousomi@iiserpune.ac.in}
\address{D. Mukherjee, Department of Mathematics, Indian Institute of Science Education and Research, Dr. Homi Bhaba Road, Pune-411008, India}
\email{debangana18@gmail.com}
\author{ Sanjiban Santra}
\address{S. Santra, Department of Basic Mathematics\\
Centro de Investigaci\'one en Mathematic\'as \\ Guanajuato,
M\'exico}
\email{sanjibansntr385@gmail.com}
\thanks{}

\subjclass[2010]{Primary 35R11, 35K08, 35J61, 35A01}
\keywords{super-critical exponent, fractional Laplacian, entire solution, blow-up, uniqueness.}
\date{}

\begin{abstract} We study the  fractional Laplacian problem
\begin{equation*}
(I_{\eps})  \quad\,\,\,\,\,
\left\{\begin{aligned}
      (-\De)^s u &=u^p -\eps u^q \quad\text{in }\quad \Om, \\
      u &>0  \quad\text{in }\quad \Om,\\
       u&=0 \quad\text{in}\quad \Rn\setminus\Om,\\
       u &\in H^s(\Om)\cap L^{q+1}(\Om); \\
          \end{aligned}
  \right.
\end{equation*}
where $s\in(0,1)$,  $q>p\geq \f{N+2s}{N-2s}$ and $\eps>0$ is a parameter. Here
$\Om\subseteq\Rn$ is a bounded star-shaped domain with smooth boundary and  $N> 2 s$. We establish existence of a variational positive solution $u_{\eps}$ and characterise the asymptotic behaviour of $u_{\eps}$ as $\eps\to 0$. When $p=\f{N+2s}{N-2s}$, we describe how the solution $u_{\eps}$ blows up at a  interior point of $\Omega$. Furthermore, we prove  the {\it local uniqueness} of solution of the above problem when $\Om$ is a convex symmetric  domain of $\Rn$ with $N>4s$ and $p=\f{N+2s}{N-2s}$.
 \end{abstract}

\maketitle
\section{\bf Introduction}
There has been considerable interest in understanding the asymptotic behavior of
positive solutions of the elliptic problem
\begin{equation}
  \label{a1}
\left\{\begin{aligned}
      \eps^{2s} (-\De)^s u &= f(u)   \quad\text{in } \Om,\\
    u &>0 \quad \text{in} \ \Omega,  \\
      u &= 0 \quad\text{on } \R^N \setminus  \Om,
    \end{aligned}
  \right.
\end{equation}
where $\eps>0$ is a parameter, $s\in(0,1)$ and $f$ is having superlinear nonlinearity with $f(0)=0$.
$\Om $ is a smooth bounded domain in $\mathbb{R}^N$. The existence and asymptotic behavior of solutions to (\ref{a1}) depend crucially on the behavior of $f$ near $0$.  It is easy to check that  problem (\ref{a1})  may not have any nontrivial solutions for small $\eps>0$ if $f'(0) > 0$.  The case of $ f' (0) <0$ has been studied by many authors. To mention a few of them in the local case, we refer the papers  \cite{DF}, \cite{NW} and the references therein.  In the nonlocal case, not much is known. Multi-peak solutions of a fractional Schr\"odinger equation in the whole of $\R^N$ was considered  in \cite{DPW}. In \cite{DPDV},  D\'{a}vila,  et al  constructed a family of solutions which have the properties that,  when $\eps \to 0$, those solutions  concentrate at an interior point of the domain in the form of a scaling ground state in entire space. Bubble solutions for the fractional problems involving the almost critical or almost supercritical powers were considered in D\'{a}vila  et al et al \cite{DRS}. 

In this paper, we consider the problem in the {\it zero
mass case} i.e., when $f(0)=0$ and $f'(0)=0.$    The problem (\ref{a1}) can be viewed as {\it borderline} problems. When $s=1,$ Berestycki and Lions in \cite{BL} proved the existence of ground
state solutions if $f(u)$ behaves like $|u|^p$ for large $u$ and
$|u|^q$ for small $u$ where $p$ and $q$ are respectively
supercritical and subcritical.

\vspace{2mm}

In this paper,  we consider the following family of problems:
\begin{equation}
  \label{eq:a3'}
\left\{\begin{aligned}
      (-\De)^s u &=u^p -\eps u^q \quad\text{in }\quad \Om, \\
      u &>0 \quad\text{in }\quad \Om,\\
            u &=0 \quad\text{in}\quad\Rn\setminus\Om,\\
            u &\in H^s(\Om)\cap L^{q+1}(\Om), 
 \end{aligned}
  \right.
\end{equation}
where $s\in(0,1)$ is fixed, $(-\De)^s$ denotes the  fractional Laplace operator defined,  up to a normalisation factor, as
\begin{align} \label{De-u}
  -\left(-\Delta\right)^s u(x)=\frac{1}{2}\int_{\mathbb{R}^N}\frac{u(x+y)+u(x-y)-2u(x)}{|y|^{N+2s}}dy, \quad x\in\Rn.
\end{align}
In \eqref{eq:a3'}, $q>p\geq 2^*-1=\f{N+2s}{N-2s}$, $\eps>0$ is a parameter,
$\Om\subseteq\Rn$ is a bounded star-shaped domain with smooth boundary and $N> 2 s$. Note under a suitable change of variable \eqref{eq:a3'} can be transformed in the form of \eqref{a1}.

We denote by $H^s(\Om)$, the usual fractional Sobolev space endowed with the so-called Gagliardo norm
\be\lab{norm-H}\norm{g}_{H^s(\Om)}=\norm{g}_{L^2(\Om)}+\bigg(\int_{\Om \times \Om} \frac{|g(x)-g(y)|^2}{|x-y|^{N+2s}}dxdy\bigg)^{1/2}.\ee
For further details on the fractional Sobolev spaces we refer to \cite{NPV} and the references therein.  Note that, in problem \eqref{eq:a3'} 
the Dirichlet datum is given in  $\Rn\setminus\Om$ and not simply
 on $\pa\Om$ and therefore we need to introduce a new functional space $X_0$, which, in our opinion, is the suitable space to work with.   
\be\label{eq:X0}
X_0(\Om):=\{v\in H^s(\Rn): v=0\quad \text{in}\quad \Rn\setminus\Om\}.
\ee
By \cite[Lemma 6 and 7]{SerVal3}, it follows that 
\be\label{norm-X}
||v||_{X_0}=\displaystyle\left(\int_{Q} \f{|v(x)-v(y)|^2}{|x-y|^{N+2s}}dxdy,\right)^\f{1}{2},
\ee
where $Q=\R^{2N}\setminus (\Om^c\times\Om^c)$ is a norm on $X_0$ and $(X_0, ||.||_{X_0})$ is a Hilbert space, with the inner product 
$$<u, v>_{X_0}=\int_{Q} \f{(u(x)-u(y))(v(x)-v(y))}{|x-y|^{N+2s}}dxdy.$$
We observe that, norms in \eqref{norm-H} and \eqref{norm-X} are not same in general, since $\Om\times\Om$ is strictly contained in $Q$ (see \cite{ SerVal2, SerVal3}) but \eqref{norm-H} and \eqref{norm-X} are equivalent in some cases, such as $s>1/2$. Clearly, the integral in (1.6) can be extended to whole of $\R^{2N}$ as $v=0$ in $\R^{N}\setminus\Om$.
It follows from \cite[Lemma 8]{SerVal3} that the embedding $X_0\hookrightarrow L^r(\Rn)$ is compact, for any $r\in[1, 2^*)$  and from \cite[Lemma 9]{SerVal2} that $X_0\hookrightarrow L^{2^*}(\Rn)$ is continuous.

\begin{definition}\lab{def-1}
We say that $u\in X_0\cap L^{q+1}(\Om)$ is a weak solution of Eq. \eqref{eq:a3'}, if $u>0$ in $\Om$ and for every $\va\in X_0$, 
$$
\int_{\Rn}\int_{\Rn}\f{(u(x)-u(y))(\va(x)-\va(y))}{|x-y|^{N+2s}}dxdy=\int_{\Om}u^p\va\ dx-\eps\int_{\Om}u^q\va\ dx.$$
\end{definition}

\vspace{3mm}

In recent years, a great deal of attention has been devoted to equations  of elliptic/parabolic type with fractional and non-local operators because these kind of equations play  important role in the real world and many perfect techniques which have been developed by well-known mathematicians during the past decades can not be directly applied to the fractional case. These equations arise from models in physics, engineering (see \cite{MK}), optimisation and finance (see \cite{CT}), obstacle problem (see \cite{S}), conformal geometry and minimal surface (see \cite{CG}) and many more, see for instance, \cite{A, BCSS, V} and the references therein. 

Nonlinear nonlocal problems of the form $(-\De)^s u=f(u)$ were 
studied by many authors where $f:\Rn\to\R$ is a certain function. Since it is almost impossible to describe all the works involving them,we explain only few of them, which are related to our problem.  In \cite{SerVal2}, Servadei and Valdinoci  studied the Brezis-Nirenberg problem in the nonlocal case. More precisely, they considered the nonlinearity of the form $\la u+u^{2^*-1}$, with $\la>0$. On the other hand, in \cite{SerVal3} the same authors studied mountain-pass solutions for the equation with general integro-differential operator and with the nonlinearities of subcritical growth.
In \cite{BM}, first and second authors 
of this paper studied the equation in whole of $\Rn$ with nonlinearities involving critical and supercritical growth. They established decay estimate of solution and the gradient of the solution at infinity and using that they prove nonexistence result via Pohozaev identity.

In the local case, $s=1$, Merle and Peletier \cite{MP} considered the equation \eqref{eq:a3'}. They proved that for $N \geq 3$, problem  \eqref{eq:a3'} possesses a family of solutions concentrating at a point $\xi_{0}$, which is a critical point of the Robin function $R$. In this paper we extend the result to the fractional Laplacian case.

\vspace{3mm}

For the supercritical case ($p>2^*-1$), define, 
\be\label{a21} F(u, \Om)=  \frac{1}{2}\frac{\displaystyle\int_{\Rn\times\Rn}\f{|u(x)-u(y)|^2}{|x-y|^{N+2s}}dx dy}{\displaystyle\int_{\Omega} |u|^{p+1} dx}+\frac{1}{q+1}\frac{\displaystyle\int_{\Om}|u|^{q+1} dx}{\displaystyle\left(\int_{\Omega} |u|^{p+1} dx\right)^l},\ee
where $l=\frac{2s(q+1)-N(p-1)}{2s(p+1)-N(p-1)}$,   $u\in X_0(\Om)\cap L^{q+1}(\Om)$ and
\be\lab{a2}
\mathcal{K}:= \inf\bigg\{F(u, \Rn): u\in D^{s,2}(\Rn)\cap L^{q+1}(\Rn),\,  \int_{\Rn}|u|^{p+1}=1\bigg\},
\ee
where $D^{s,2}(\Rn)$ is the closure of $C^{\infty}_0(\Rn)$ w.r.t. to the norm $\bigg(\displaystyle\int_{\Rn \times \Rn} \frac{|u(x)-u(y)|^2}{|x-y|^{N+2s}}dxdy\bigg)^{1/2}$.

For the critical case ($p=2^*-1$), we consider the usual functional
\be\lab{sep-17-1}
S(u)=\frac{\displaystyle\int_{\Rn\times\Rn}\f{|u(x)-u(y)|^2}{|x-y|^{N+2s}}dx dy}{\displaystyle\left(\int_{\Om}|u|^{p+1} dx\right)^\f{2}{p+1}},
\ee
where $u\in X_0(\Om)$.\\

Define, the Sobolev constant
\bea\label{go} \mathcal{S}:&=&  \inf_{u\in D^{s,2}(\Rn)\setminus\{0\}}\frac{\displaystyle\int_{\Rn\times\Rn}\f{|u(x)-u(y)|^2}{|x-y|^{N+2s}}dxdy}{\displaystyle\left(\int_{\Rn} |u|^{2^*} dx\right)^\f{2}{2^*}}\eea
or, equivalently, $$\mathcal{S}=\inf\bigg\{S(v): v\in D^{s,2}(\Rn), \ \int_{\Rn}|v|^{2^*}dx=1\bigg\}.$$
It is well known by \cite{L} that  $\mathcal{S}$ is achieved by 
\be\label{ent-U}
U(x)=c_{N,s}\big(1+|x|^2\big)^{-\big(\f{N-2s}{2}\big)},
\ee
where \be\lab{c-ns}
c_{N,s}=2^\f{N-2s}{2}\displaystyle\left(\f{\Ga(\f{N+2s}{2})}{\Ga(\f{N-2s}{2})}\right)^\f{N-2s}{4s}.\ee
By \cite{CLO} and \cite{LZ},  a direct computation implies that for all $\eps >0$ and for any $a\in \R^N$,  $U$ is the unique solution
satisfying $$U_{\eps, a}(x)=\eps^{-\frac{N-2s}{2}}U\bigg(\frac{x-a}{\eps}\bigg)
$$
and verifies the following equation
\begin{equation}
  \label{ent}
\left\{\begin{aligned}
      (-\Delta)^s U&=U^{2^*-1} \quad\text{in }\quad \R^N,\\ U&>0  \quad\text{in }\quad \R^N, \\
      U &\in D^{s,2}(\Rn).
         \end{aligned}
  \right.
\end{equation}
Define the Green's function $G=G(x,y)$ of the operator $(-\De)^s$ in $\Om$ for $x, y\in\Om$ as  
\begin{equation}
  \label{green}
  \left\{
    \begin{aligned}
      (-\De_x)^s G(x, y)&=  \de_{y} \; &&\text{in } \Om, \\
G(x, y) &=0  \; &&\text{in } \Rn\setminus \Om.
    \end{aligned}
  \right.
\end{equation}
It is convenient to introduce the regular part of $G$, which is often denoted by $H$, defined by
\be\lab{eq:gr-H}
 G(x, y):=F(x, y)-H(x,y),
\ee
where the function $H$ satisfies
\begin{equation}
  \label{rob}
  \left\{
    \begin{aligned}
      (-\De_x)^s H(x, y)&=  0 \; &&\text{in } \Om, \\
H(x, y) &=F(x, y)  \; &&\text{in } \Rn\setminus \Om,
    \end{aligned}
  \right.
\end{equation}
for any fixed $y\in \Om$
and
\be\lab{eq:gr-F} F(x, y)= \frac{a_{N,s}}{|x-y|^{N-2s}} ,\ee
 is the fundamental solution of the  elliptic operator $(-\De)^s.$  In \eqref{eq:gr-F},  $a_{N,s}$ is 
 $$a_{N,s}:=\f{\Ga(\f{N}{2}-s)}{2^{2s}\pi^\f{N}{2}\Ga(s)},$$ (see \cite{B}).  Define the Robin function as
\begin{equation}\label{Robin}
R(x)= H(x, x).
\end{equation}
For the continuity of $R$, see Abatangelo \cite{A1}. 

\begin{definition}
We say $\Om$ is strictly star-shaped with respect to the point $y$, if 
$$\big\langle x-y, n(x)\big\rangle>0 \quad\forall\quad x\in\pa\Om,$$
where $n(x)$ is the unit outward normal to $\pa\Om$ at $x$.  
\end{definition}

We recall here the general Pohozaev identity in the nonlocal case due to Ros-Oton and Serra \cite{RS}:
Let $u$ be a bounded solution of 
\begin{equation}
  \label{eq:RS-1}
\left\{\begin{aligned}
      (-\De)^s u &=f(u) \quad\text{in }\quad \Om, \\
         u &=0 \quad\text{in}\quad\Rn\setminus\Om,
 \end{aligned}
  \right.
\end{equation}
where $\Om\subset\Rn$ is a bounded $C^{1,1}$ domain, $f$ is locally Lipschitz and $d(x)=\text{dist}(x,\pa\Om)$. Then $u$ satisfies the following identity:
\be\lab{poho-1}
(2s-N)\int_{\Om}uf(u)\ dx+2N\int_{\Om}F(u)\ dx=\Ga(1+s)^2\int_{\pa\Om}\bigg(\f{u(x)}{d^s(x)}\bigg)^2\big\langle x,\,\nu(x)\big\rangle dS(x),
\ee
 where $F(t)=\displaystyle\int_0^t f(s)ds,$ \, $\nu(x)$ is the unit outward normal to $\pa\Om$ at $x$ and $\Ga$ is the Gamma function.

Translating the function $u$, it is easy to see that, when $\Om$ is a  $C^{1,1}$ bounded domain,  the following general identity holds:
\be\lab{poho}
(2s-N)\int_{\Om}uf(u)\ dx+2N\int_{\Om}F(u)\ dx=\Ga(1+s)^2\int_{\pa\Om}\bigg(\f{u(x)}{d^s(x)}\bigg)^2\big\langle x-y,\,\nu(x)\big\rangle dS(x),
\ee
for every $y\in\Rn$.

Note that, by the above Pohozaev identity \eqref{eq:a3'} does not have any solution in a star-shaped domain when $\eps=0$.

\vspace{3mm}

We turn now to a brief description of the results presented below. \begin{theorem}\label{main1}
There exists $\eps_n>0$ and $\la_n>0$ with $\eps_n\to 0$ as $n\to\infty$ and $\la_n$ uniformly bounded above and away from zero, such that
\begin{itemize}
\item[(i)] there exists a solution $u_n$ to Eq. \eqref{eq:a3'} corresponding to $\eps=\eps_n$;
\item[(ii)] if $p>2^*-1$, then $F(\la_n u_n)\to \mathcal{K}$ and $\displaystyle\int_{\Om} u_n^{p+1}dx\to 0$ as $n\to\infty$;
\item[(iii)]if $p=2^*-1$, then $S(u_n)\to\mathcal{S}$ as $n\to\infty$ and there exist  constants $A, B>0$ such that for all $n\geq 1$, it holds $A<\displaystyle\int_{\Om}  u_n^{p+1}dx< B$ ,
\end{itemize}
where $F(.)$,\, $S(.)$,\, $\mathcal{K}$ and $\mathcal{S}$ are defined as in \eqref{a21}, \eqref{sep-17-1}, \eqref{a2} and \eqref{go} respectively.
\end{theorem}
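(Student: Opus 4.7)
The plan is to obtain $u_n$ as mountain--pass critical points of the energy functional
\begin{equation*}
I_\eps(u) = \frac{1}{2}\|u\|_{X_0}^2 - \frac{1}{p+1}\int_\Om (u^+)^{p+1}\,dx + \frac{\eps}{q+1}\int_\Om (u^+)^{q+1}\,dx
\end{equation*}
on $X_0(\Om)\cap L^{q+1}(\Om)$, and then to let $\eps_n\to 0$. Although $u^{p+1}$ is of super-critical growth, the term $\frac{\eps}{q+1}\int u^{q+1}$ controls the $L^{q+1}$ norm of Palais--Smale sequences; combined with the interpolation $\|u\|_{p+1}\leq \|u\|_{2^*}^{1-\alpha}\|u\|_{q+1}^{\alpha}$ (valid since $p+1\in[2^*,q+1)$) and the compact embedding $X_0\hookrightarrow L^r$ for $r<2^*$, this yields the mountain--pass geometry and compactness of bounded PS sequences. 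To produce a critical point $u_\eps$ at the MP level $c_\eps$, in the critical case $p=2^*-1$ I would verify the PS condition strictly below the threshold $\frac{s}{N}\mathcal{S}^{N/(2s)}$ by computing $\max_{t\geq 0}I_\eps(tU_{\delta,x_0})$ with the Talenti bubble \eqref{ent-U}; in the super-critical case the compactness argument above works unconditionally once the $L^{q+1}$ bound on PS sequences is in hand.

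The asymptotic behaviour is extracted from three identities satisfied by $u_n := u_{\eps_n}$: the energy identity
\begin{equation*}
c_{\eps_n} = \Big(\tfrac{1}{2}-\tfrac{1}{p+1}\Big)\|u_n\|_{X_0}^2 + \Big(\tfrac{1}{p+1}-\tfrac{1}{q+1}\Big)\eps_n\int_\Om u_n^{q+1}\,dx,
\end{equation*}
the Nehari identity $\|u_n\|_{X_0}^2=\int_\Om u_n^{p+1}\,dx-\eps_n\int_\Om u_n^{q+1}\,dx$, and the Pohozaev identity \eqref{poho} applied to $f(u)=u^p-\eps u^q$ with $y$ a point with respect to which $\Om$ is strictly star-shaped:
\begin{equation*}
|\theta(p)|\int_\Om u_n^{p+1}\,dx + \Ga(1+s)^2\int_{\pa\Om}\Big(\tfrac{u_n}{d^s}\Big)^2\langle x-y,\nu\rangle dS = |\theta(q)|\,\eps_n\int_\Om u_n^{q+1}\,dx,
\end{equation*}
where $\theta(r):=(2s-N)+\tfrac{2N}{r+1}$ vanishes at $r=2^*-1$ and is negative for $r>2^*-1$. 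In the critical case ($\theta(p)=0$) the Pohozaev identity forces $\eps_n\int u_n^{q+1}$ to equal (up to a constant) the positive boundary term; combined with $c_{\eps_n}$ bounded above and below, we obtain $A<\int_\Om u_n^{p+1}\,dx<B$, and the sharp test-function estimate $c_{\eps_n}\leq \tfrac{s}{N}\mathcal{S}^{N/(2s)}+o(1)$ together with the Sobolev inequality gives $S(u_n)\to\mathcal{S}$.

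For the super-critical case I would choose $\la_n$ as the unique positive critical point of $\la\mapsto F(\la u_n,\Om)$. The exponent $l$ in \eqref{a21} is designed precisely so that $F(\cdot,\Rn)$ is invariant under the two-parameter scaling $u(x)\mapsto\la u(\mu x)$ with $\mu=\la^{(p-1)/(2s)}$; this invariance yields $F(v,\Rn)\geq\mathcal K$ for every admissible $v$ (by rescaling to $\int|v|^{p+1}=1$) and hence $F(\la_n u_n,\Om)\geq\mathcal K$, and it also shows that $\la_n$ stays bounded above and away from zero as $\eps_n\to 0$. The matching upper bound $F(\la_n u_n,\Om)\to\mathcal K$ follows from the fact that $u_n$ concentrates on a ground state of the whole-space limit equation constructed in \cite{BM}, which is itself a $\mathcal K$-minimiser of \eqref{a2}. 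Finally, $c_{\eps_n}\to 0$ in this regime (from the explicit scaling of the ground-state energy) forces $\eps_n\int u_n^{q+1}\to 0$ through the energy identity, and then the Pohozaev identity with $\theta(p)<0$ yields $\int_\Om u_n^{p+1}\,dx\leq \tfrac{|\theta(q)|}{|\theta(p)|}\eps_n\int u_n^{q+1}\,dx\to 0$.

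The main obstacle I anticipate is the sharp two-sided matching of the bounded-domain mountain--pass value to the whole-space constant $\mathcal K$ (or $\tfrac{s}{N}\mathcal{S}^{N/(2s)}$ in the critical case): this requires careful test-function estimates with the fractional Talenti bubble or the $\mathcal K$-minimiser, whose non-local algebraic decay makes the error from cutting off inside $\Om$ more delicate to control than in the classical $s=1$ setting treated by Merle--Peletier.
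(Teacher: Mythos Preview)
Your route is genuinely different from the paper's. The paper does \emph{not} use a mountain--pass argument on $I_\eps$; instead it rescales to the blown-up domain $\Om_\eps=\eps^{-\frac{p-1}{2s(q-p)}}\Om$ and performs a \emph{constrained minimisation} of
\[
\hat F(w)=\tfrac12\|w\|_{X_0}^2+\tfrac1{q+1}\int_{\Om_\eps} |w|^{q+1}
\]
over the manifold $N_\eps=\{w\in X_0(\Om_\eps)\cap L^{q+1}:\int_{\Om_\eps}|w|^{p+1}=1\}$. The minimiser $w_\eps$ solves $(-\De)^s w_\eps=\la_\eps w_\eps^p-w_\eps^q$ with Lagrange multiplier $\la_\eps$, and one reads off $2S_\eps<\la_\eps<(q+1)S_\eps$ directly from $\hat F(w_\eps)=S_\eps$. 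A separate test-function lemma (the paper's Theorem~A.1) shows $S_\eps\to\mathcal K$ (resp.\ $\mathcal S/2$), which simultaneously bounds $\la_\eps$. After scaling back and setting $u_\eps=\la_\eps^{1/(p-1)}v_\eps$, one obtains a solution of \eqref{eq:a3'} for the modified parameter $\eps\la_\eps^{-(q-1)/(p-1)}$, and with $\la_n:=\la_\eps^{-1/(p-1)}$ one has $\la_n u_n=v_\eps$, whence $F(\la_n u_n)=F(v_\eps)=\hat F(w_\eps)=S_\eps\to\mathcal K$ \emph{by construction}.

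Your existence argument is fine: the $\eps\int u^{q+1}$ term does give PS boundedness in $X_0\cap L^{q+1}$, and interpolation between compact $L^r$ ($r<2^*$) and bounded $L^{q+1}$ yields strong $L^{p+1}$ convergence on the bounded domain, so PS compactness holds for every $\eps>0$ even when $p=2^*-1$. The critical case (iii) then goes through essentially as you outline: from $c_{\eps_n}\le\tfrac{s}{N}\mathcal S^{N/(2s)}+o(1)$ and the Nehari/Sobolev combination $\mathcal S(\int u_n^{2^*})^{2/2^*}\le\|u_n\|_{X_0}^2\le\int u_n^{2^*}$ one gets $\|u_n\|_{X_0}^2,\int u_n^{2^*}\to\mathcal S^{N/(2s)}$ and $\eps_n\int u_n^{q+1}\to 0$, hence $S(u_n)\to\mathcal S$ and two-sided bounds on $\int u_n^{2^*}$.

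The supercritical case (ii), however, has two genuine gaps. First, your $\la_n$ (the minimiser of $\la\mapsto F(\la u_n)$) satisfies an algebraic relation making it a power of the ratio
\[
\frac{\|u_n\|_{X_0}^2\big/\int u_n^{p+1}}{\int u_n^{q+1}\big/\big(\int u_n^{p+1}\big)^l},
\]
and the scale-invariance of $F$ under $u\mapsto\la u(\la^{(p-1)/2s}\cdot)$ says nothing about this one-parameter family $\la\mapsto\la u_n$; so the assertion that ``scale invariance shows $\la_n$ stays bounded'' is unjustified. Second, the key upper bound $F(\la_n u_n)\le\mathcal K+o(1)$ is deduced from ``$u_n$ concentrates on a whole-space ground state'', but this concentration is exactly what remains to be proved and is nowhere derived from your MP construction; invoking it here is circular. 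The paper's constrained-minimisation set-up avoids both issues at once: $\la_n u_n$ \emph{is} the minimiser $w_\eps$ up to rescaling, so $F(\la_n u_n)=S_\eps$ identically, and the convergence $S_\eps\to\mathcal K$ is a pure test-function/density statement on the exhausting domains $\Om_\eps\to\R^N$.
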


\begin{theorem}\label{main2}  Let $\Om$ be a smooth bounded star-shaped domain with respect to 0, $2^*-1=p<q$.  Suppose  $u_{\eps}\in X_0( \Om)$ is a  solution  of Eq. (\ref{eq:a3'})  such that
\be\lab{nov-1-1}S(u_{\eps}) \to \mathcal{S} \quad\text{and}\quad
 A<\displaystyle\int_{\Om}u_{\eps}^{p+1}dx< B,\ee where $S(.)$, $\mathcal{S}$ are as in \eqref{sep-17-1} and \eqref{go} respectively.  
Let $x_{\eps}$ be a point such that $||u_{\eps}||_{L^{\infty}}=u_{\eps}(x_{\eps})$ Assume that, up to a  subsequence $x_{\eps}\to x_0$ as $\eps\to 0$. Then  $x_0$ is an interior point of $\Om$ and along a subsequence
\begin{eqnarray*}  \lim_{\eps \rightarrow 0}\eps \|u_{\eps}\|_{\infty}^{q-p+2}
=\f{\om_Nc_{N,s}^{2^*}}{2}\f{(q+1)R_{N, s, x_0}}{q(N-2s)-(N+2s)} s^2 \Gamma(s)^2B\bigg(\f{N}{2},\ s\bigg)^2\times\no\\
B\bigg(\f{N}{2},\ \big(\f{N-2s}{2}\big)q-s\bigg)^{-1},\no
\end{eqnarray*}
where  $c_{N, s}$ is defined in \eqref{c-ns} and  $B(a,b)$ is the Beta function defined by
\begin{equation}\label{eq:beta}
B(a,b)=\int_0^{\infty}t^{a-1}(1+t)^{-a-b}.
\end{equation}
Here $$R_{N, s, x_0}= \int_{\pa \Om} \bigg(\f{G(x, x_0)}{d^s(x)}\bigg)^2 \langle x-x_{0}, \nu \rangle dS.$$
Furthermore,
 \begin{equation} \label{nsd2} \lim_{\eps \rightarrow 0} \f{ u_{\eps}(x) \|u_{\eps}\|_{\infty} }{d^s(x)}= \f{\om_{N}c_{N,s}^{2^*}}{2}\frac{\Ga(\frac{N}{2})\Ga(s)}{\Gamma(\frac{N+2s}{2})}\f{G(x, x_0)}{d^s(x)}  \quad\text{in}\ C_{loc}(\overline{\Om}\setminus\{x_0\}),
\end{equation}
where $G(x,x_0)$ is the Green function as defined in \eqref{green} and $d(x)=\text{dist}(x,\pa\Om)$.
\end{theorem}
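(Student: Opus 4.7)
The plan is to adapt the Merle--Peletier strategy for the local problem to the nonlocal setting by combining a blow-up analysis around $x_\eps$, the Green's representation, and the fractional Pohozaev identity \eqref{poho}. First I would set $M_\eps := \|u_\eps\|_{L^\infty(\Om)} = u_\eps(x_\eps)$ and rescale $v_\eps(y) := M_\eps^{-1} u_\eps\bigl(x_\eps + M_\eps^{-2/(N-2s)} y\bigr)$, so that $v_\eps \leq 1 = v_\eps(0)$ satisfies $(-\De)^s v_\eps = v_\eps^p - \eps M_\eps^{q-p} v_\eps^q$ on the dilated domain $M_\eps^{2/(N-2s)}(\Om - x_\eps)$. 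The hypothesis $S(u_\eps)\to\mathcal{S}$ together with $A<\int_\Om u_\eps^{p+1}<B$ forces $M_\eps\to+\infty$ and $\eps M_\eps^{q-p}\to 0$; uniform fractional $C^\alpha$ estimates then give $v_\eps\to U$ locally uniformly, where $U$ is the Aubin--Talenti profile \eqref{ent-U}. If $x_0\in\pa\Om$, the dilated domains would converge to a half-space, contradicting the strict positivity of $U$ on $\Rn$; equivalently, \eqref{poho} applied with $y = x_0$ together with the star-shapedness and the sign of $\langle x-x_0,\nu(x)\rangle$ gives a contradiction. Hence $x_0$ is interior.

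Next I would use the Green's representation
$$u_\eps(x) = \int_\Om G(x,y)\bigl[u_\eps(y)^p - \eps u_\eps(y)^q\bigr]\,dy,$$
multiply by $M_\eps$, change variables $y = x_\eps + M_\eps^{-2/(N-2s)} z$, and use $v_\eps\to U$ together with the decay of $U^p$ at infinity to obtain
$$M_\eps\, u_\eps(x) \longrightarrow G(x,x_0)\int_{\Rn} U(z)^p\,dz,$$
locally uniformly on $\overline\Om\setminus\{x_0\}$. The Aubin--Talenti integral $\int_{\Rn} U^p$ is evaluated in polar coordinates via the Beta identity \eqref{eq:beta} and produces the constant $C:=\tfrac{\om_N c_{N,s}^{2^*}}{2}\cdot\tfrac{\Ga(N/2)\Ga(s)}{\Ga((N+2s)/2)}$ appearing in \eqref{nsd2}. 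Upgrading this to $C_{loc}(\overline\Om\setminus\{x_0\})$ convergence of $u_\eps\|u_\eps\|_\infty/d^s$ requires the Ros-Oton--Serra boundary regularity for the Dirichlet fractional Laplacian.

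For the quantitative limit I would apply \eqref{poho} with $f(u)=u^p-\eps u^q$, antiderivative $\tfrac{u^{p+1}}{p+1}-\eps\tfrac{u^{q+1}}{q+1}$, and $y=x_0$. Since $p=\frac{N+2s}{N-2s}$, the critical $u^{p+1}$ contributions cancel exactly and the identity reduces to
$$-\eps\,\frac{q(N-2s)-(N+2s)}{q+1}\int_\Om u_\eps^{q+1}\,dx \;=\; \Ga(1+s)^2\int_{\pa\Om}\left(\frac{u_\eps(x)}{d^s(x)}\right)^{2}\langle x-x_0,\nu(x)\rangle\,dS(x).$$
By the previous step the right-hand side is asymptotic to $M_\eps^{-2}\,\Ga(1+s)^2\,C^2\,R_{N,s,x_0}$, while the bubble rescaling yields $\int_\Om u_\eps^{q+1}\sim M_\eps^{q+1-2N/(N-2s)}\int_{\Rn} U^{q+1}$, the latter evaluating via \eqref{eq:beta} to $\tfrac{\om_N c_{N,s}^{q+1}}{2}\,B\bigl(\tfrac{N}{2},\tfrac{(N-2s)q}{2}-s\bigr)$. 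Using $\Ga(1+s)=s\Ga(s)$, solving for $\eps M_\eps^{q-p+2}$, and simplifying the resulting ratio of powers of $c_{N,s}$ and Beta functions produces the stated limit formula.

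The main obstacle is the boundary analysis. Lifting the interior convergence of $M_\eps u_\eps$ to a $C_{loc}(\overline\Om\setminus\{x_0\})$ convergence of $M_\eps u_\eps/d^s$, so that the Pohozaev boundary term passes to the limit by dominated convergence and produces exactly $R_{N,s,x_0}$, requires delicate fractional Schauder-type estimates up to $\pa\Om$ and a uniform control of the supercritical perturbation $\eps u_\eps^q$ near the boundary; this is where one truly departs from the Merle--Peletier local template. Matching the explicit constants via $\Ga$- and $B$-identities is the remaining bookkeeping, tedious but routine.
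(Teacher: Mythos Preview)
Your overall architecture---blow-up analysis around $x_\eps$, Green's representation for the global profile, and the fractional Pohozaev identity for the quantitative limit---is exactly the paper's route. The difference lies in two steps where your sketch is either incomplete or diverges from the paper.

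\medskip

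\textbf{A genuine gap: uniform decay of the rescaled profile.} You pass to the limit in the Green's representation and in $\int_\Om u_\eps^{q+1}$ by invoking ``$v_\eps\to U$ together with the decay of $U^p$ at infinity''. Local $C^\alpha$ convergence together with decay of the \emph{limit} $U$ is not enough to justify dominated convergence in either integral: you need a \emph{uniform} pointwise bound of the form $v_\eps(y)\le C(1+|y|)^{-(N-2s)}$ for all $\eps$. The paper obtains this by applying the Kelvin transform to $v_\eps$ (your $z_\eps$), which produces a new equation on which a Moser-type $L^\infty$ estimate (Lemma~\ref{lb}) gives boundedness near the origin, hence the required decay of $v_\eps$ at infinity. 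This uniform bound is then what drives both (i) the convergence $\int_{\Om_\eps} v_\eps^{q+1}\to\int_{\Rn} U^{q+1}$ feeding the right-hand side of Pohozaev, and (ii) the careful splitting of the Green integral into a ball around $x_\eps$ and its complement (Lemmas~\ref{p8} and~\ref{p7.1}) that yields the $C_{loc}(\overline\Om\setminus\{x_0\})$ convergence of $M_\eps u_\eps/d^s$. Without it both limits are formal.

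\medskip

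\textbf{A different route for the interior blow-up.} Your half-space argument for $x_0\in\Om$ is incomplete: if $\operatorname{dist}(x_\eps,\pa\Om)\,M_\eps^{2/(N-2s)}\to\infty$ the rescaled domains still fill $\Rn$ and no contradiction arises, so one must first control this ratio. Your alternative via Pohozaev with $y=x_0$ does not work either, since star-shapedness with respect to $0$ gives no sign information on $\langle x-x_0,\nu(x)\rangle$ for $x_0$ near $\pa\Om$. The paper instead uses a moving-plane argument (following Felmer--Wang/Jarohs--Weth in the fractional setting) to show that $u_\eps$ is uniformly bounded in a fixed neighbourhood of $\pa\Om$; since $u_\eps(x_\eps)\to\infty$, the maximum point cannot approach the boundary. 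This is a genuinely different mechanism from what you propose.
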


\vspace{3mm}

\begin{remark} Under a suitable modification to the Theorem \ref{main2}, a similar blow-up type result for the equation with $(-\De)^s$ operator in a smooth bounded domain $\Om$ with \it{outside} zero Dirichlet boundary condition  can be obtained for the nonlinearity $f_{1}(u)= u^{2^*-1-\eps}$ under the assumption
$$
\tilde{F}(u_{\eps}):=\frac{\displaystyle\int_{\Rn\times\Rn}\f{|u_{\eps}(x)-u_{\eps}(y)|^2}{|x-y|^{N+2s}}dx dy}{\displaystyle\left(\int_{\Om}|u|^{2^*-\eps} dx\right)^\f{2}{2^*-\eps}} \to \mathcal{S}  \text{ whenever } N> 2s$$ 
and for the nonlinearity $f_{2}(u)= u^{2^*-1}+ \eps u$ under the assumption
\be\no S(u_{\eps}) \to \mathcal{S} \text{ whenever } N> 4s .\ee
\end{remark}

Concerning the \textit{uniqueness} problem, the shape of the domain plays an important role and hence some assumptions on $\Om$ is needed, see \cite{G}. To prove uniqueness theorem, our assumption on the domain are the following:

\vspace{2mm}

(A1) $\Om$ is symmetric with respect to the hyperplanes $\{x_i=0\}, i=1, 2, \cdots, N$.

(A2) $\Om$ is convex in the $x_i$ directions, $i=1, 2 \cdots, N$.

\begin{remark}\lab{r:nov-1} By (A1), (A2) and in virtue \cite[Theorem 3.1]{FW} (also see  \cite[Corollary 1.2]{JW}), every solution $u_{\eps}$ of \eqref{eq:a3'} is symmetric with respect to the hyperplanes  $\{x_i=0\}, i=1,\cdots, N$
and strictly decreasing in the $x_i$ direction,  $\ i=1,\cdots, N$ . Therefore $$\max_{x\in\Om}u_{\eps}(x)=u_{\eps}(0).$$
\end{remark}

\begin{theorem}\lab{t:unique}
Let $2^*-1=p<q$ and $\Om$ be smooth bounded  star-shaped domain in $\Rn$ with respect to $0$, $N>4s$, satisfying (A1) and (A2). Suppose $u_{\eps}$ and $v_{\eps}$ are two solutions of \eqref{eq:a3'} with $\max_{x\in\Om}u_{\eps}=\max_{x\in\Om}v_{\eps}$ and satisfy \eqref{nov-1-1} . Then, there exists $\eps_0>0$ such that $\forall \eps\in(0,\eps_0)$, 
$$u_{\eps}\equiv v_{\eps} \quad\text{in}\quad \Om.$$
\end{theorem}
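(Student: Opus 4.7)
My plan is a contradiction/linearization argument in the spirit of Glangetas and Grossi for the local case. Assume there are sequences $\eps_n\downarrow 0$ and pairs of solutions $u_n:=u_{\eps_n}$, $v_n:=v_{\eps_n}$ with $u_n\not\equiv v_n$, $\|u_n\|_{L^\infty}=\|v_n\|_{L^\infty}$, and both satisfying \eqref{nov-1-1}. By Remark \ref{r:nov-1}, $u_n$ and $v_n$ are symmetric with respect to every coordinate hyperplane and attain their maximum at $0$. Theorem \ref{main2} then forces concentration at $x_0=0$ with a common blow-up rate $\mu_n=\|u_n\|_\infty^{-2/(N-2s)}$ and identical outer profile given by \eqref{nsd2}.

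Next, I linearize: set
\begin{equation*}
w_n:=\f{u_n-v_n}{\|u_n-v_n\|_{L^\infty(\Om)}},
\end{equation*}
so $\|w_n\|_\infty=1$ and $w_n$ is symmetric in each variable. A mean-value expansion yields
\begin{equation*}
(-\De)^s w_n = c_n(x)\,w_n \ \text{in}\ \Om,\qquad w_n=0 \ \text{in}\ \Rn\setminus\Om,
\end{equation*}
with $c_n(x)=p\xi_n(x)^{p-1}-\eps_n q\eta_n(x)^{q-1}$ for suitable intermediate $\xi_n,\eta_n$. I now do a two-scale analysis. On the inner scale, rescale $\hat w_n(y):=w_n(\mu_n y)$; by the known convergence $\mu_n^{(N-2s)/2}u_n(\mu_n\cdot)\to U$ (with $U$ from \eqref{ent-U}) and uniform H\"older regularity for $(-\De)^s$, I extract $\hat w_n\to W$ in $C_{\mathrm{loc}}(\Rn)$, where $W$ is bounded and solves $(-\De)^s W=pU^{p-1}W$ in $\Rn$. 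The fractional non-degeneracy result of D\'avila--del~Pino--Sire classifies the bounded kernel as $\mathrm{span}\{\pa_1 U,\ldots,\pa_N U,Z\}$ with $Z(x):=\tfrac{N-2s}{2}U(x)+x\cdot\nabla U(x)$; symmetry of $w_n$ kills the translation modes, giving $W=\alpha Z$ for some $\alpha\in\R$. On the outer scale, using \eqref{nsd2} and testing $w_n$ against the Green's function $G(\cdot,0)$, I obtain $\|u_n\|_\infty w_n\to\beta G(\cdot,0)$ in $C_{\mathrm{loc}}(\overline{\Om}\setminus\{0\})$, with $\beta$ rigidly coupled to $\alpha$ by a matching argument at intermediate scales.

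The crux is a Pohozaev-type identity applied to the pair $(u_n,v_n)$. Applying \eqref{poho} (with $y=0$) to $u_n$ and to $v_n$ separately, subtracting, and dividing by $\|u_n-v_n\|_\infty$ produces an identity whose boundary contribution is
\begin{equation*}
\Ga(1+s)^2\int_{\pa\Om}\bigg(\f{u_n(x)+v_n(x)}{d^s(x)}\bigg)\bigg(\f{w_n(x)}{d^s(x)}\bigg)\big\langle x,\nu(x)\big\rangle dS(x),
\end{equation*}
and whose interior contribution pairs $w_n$ against linearizations of $u^p$ and $\eps_n u^q$ at $u_n$ and $v_n$. Feeding in the two-scale asymptotics together with the sharp blow-up rate of Theorem \ref{main2}, both sides can be evaluated to leading order in $\eps_n$: the boundary side contributes a nonzero multiple of $\beta R_{N,s,0}$, while the interior side reduces to a nonzero multiple of $\alpha\int_{\Rn}U^{p-1}Z^2$ plus a correction from the supercritical $\eps_n u^q$ term that is controlled provided $N>4s$. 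The rigid $\alpha$--$\beta$ relation then forces $\alpha=\beta=0$, hence $W\equiv 0$ and the outer limit of $w_n$ vanishes as well. A standard Moser iteration on the linear equation for $w_n$, combined with the Ros-Oton--Serra boundary regularity, upgrades $w_n\to 0$ uniformly on $\overline{\Om}$, contradicting $\|w_n\|_\infty=1$.

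The principal technical obstacle is extracting the matched asymptotics precisely enough to expose the leading-order cancellation in the Pohozaev identity and the rigid coupling between $\alpha$ and $\beta$: unlike the local case, the nonlocal operator forces one to work with $u_n/d^s$ rather than a normal derivative, which demands sharp up-to-the-boundary regularity for $u_n$ and careful control of the tail integrals inherent in \eqref{De-u}. The convexity and symmetry hypotheses (A1)--(A2) are invoked precisely to freeze the concentration point at $0$ and to suppress the translation modes in the classification of $W$, while the condition $N>4s$ appears to guarantee integrability of the supercritical correction terms in the matching step.
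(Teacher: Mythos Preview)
Your overall architecture is correct and matches the paper's: linearize, rescale, classify the limit via D\'avila--del~Pino--Sire nondegeneracy, kill translation modes by symmetry, and derive a contradiction from the subtracted Pohozaev identity. But two of your key claims are wrong and would make the argument collapse as written.

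First, and most importantly, your identification of the interior Pohozaev contribution is incorrect. When $p=2^*-1$, the term coming from $u^p$ in the Pohozaev identity \eqref{poho} vanishes \emph{identically}, since $(2s-N)+2N/(p+1)=0$; subtracting the identities for $u_n$ and $v_n$ therefore produces \emph{only} the supercritical term on the interior side:
\[
\eps_n\Big[(N-2s)-\tfrac{2N}{q+1}\Big]\int_{\Om}\big(u_n^{q+1}-v_n^{q+1}\big)\,dx.
\]
This is not a ``correction'' to be controlled; it is the entire interior contribution. After the change of variables $x=\mu_n y$ and using the balance $\eps_n\|u_n\|_\infty^{q-p+2}\to C>0$ from Theorem~\ref{main2}, this limits to a nonzero multiple of $\alpha\int_{\Rn}\psi(x)\,Z(x)^q\,dx$ (with $\psi$ the $Z$-mode), not of $\alpha\int U^{p-1}Z^2$. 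The sign contradiction in the paper comes precisely from computing this integral and comparing it to the boundary term; your proposed leading term simply is not there.

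Second, you have misplaced the hypothesis $N>4s$. In the paper it is \emph{not} used to control supercritical corrections in the Pohozaev step. It enters earlier, in establishing the decay $|\psi_\eps(x)|\le C|x|^{-(N-2s)}$ on the blown-up domain: one takes the Kelvin transform of $\psi_\eps$ and needs the coefficient of the resulting linear equation to lie in $L^t$ for some $t>N/(2s)$ near the origin, which forces $N>4s$. Without this decay you cannot run the final step (ruling out that $\|\psi_\eps\|_\infty$ is attained at points escaping to infinity), nor justify the dominated-convergence arguments in the Pohozaev computation.

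A smaller point: your outer-scale normalization is off by one power. The correct statement is that $\|u_n\|_\infty^{2}\,\dfrac{(u_n-v_n)(x)}{\|u_n-v_n\|_\infty\,d^s(x)}\to -c_0\,\dfrac{G(x,0)}{d^s(x)}$ locally uniformly in $\overline{\Om}\setminus\{0\}$, with $c_0$ computed directly from the Green representation (not from an intermediate-scale matching), and $c_0>0$ precisely because $\int_{\Rn}\psi\,Z^{p-1}<0$ for the $Z$-mode $\psi$. This, paired with $\|u_n\|_\infty(u_n+v_n)/d^s\to 2\gamma_0 G(\cdot,0)/d^s$, is what makes the boundary side negative while the interior ($q$-)side is positive, yielding the contradiction $\alpha=0$.
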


The rest of the paper is organised as follows. In Section 2, we prove Theorem 1.1. Section 3 deals with the proof of Theorem 1.2. Section 4 is devoted to the study of  uniqueness result. The last section is the  Appendix. 
Laplace

\vspace{3mm}

{\bf Notations}: Throughout this paper $C$ denotes the generic constants which may vary from line to line. Below are few notations which we use throughout the paper:
\begin{itemize}
\item $\om_N=$ surface measure of unit ball in $\Rn$,
\item $G(x,y)$ denotes the Green function of $(-\De)^s$ in $\Om$,
\item $B(.,.)$ and  $\Ga(.)$ denote the Beta function and the Gamma function respectively.
\item $D^{s,2}(\Rn)$ denotes the closure of $C^{\infty}_0(\Rn)$ with respect to 
$(\int_{\Rn}\int_{\Rn}\frac{|u(x)-u(y)|^2}{|x-y|^{N+2s}}dxdy)^\frac{1}{2}$.
\end{itemize}

\section{\bf Asymptotic behavior}

\begin{proposition}\label{p:main1}
Let $2^*-1\leq p<q$. Then there exists $\eps_{0}>0$ such that  for  all $\eps\in (0, \eps_{0})$,  the problem
\begin{equation}
  \label{eq:eps_la}
\left\{\begin{aligned}
      (-\De)^s v&=\la_{\eps}v^p -\eps  v^q &&\text{in } \Om,\\ v&>0 && \text{ in } \Om, \\
      v(x) & =0 &&\text{ in } \Rn\setminus \Om,
          \end{aligned}
  \right.
\end{equation}
admits a solution $v_{\eps}$, with the property that
$$A<\la_{\eps}<B,$$ for some constants $A,B>0$, independent of $n$. In addition
\begin{itemize}
\item[(i)] if $p>2^*-1$, then $F(v_\eps)\to \mathcal{K}$ and $\displaystyle\int_{\Om}  v_{\eps}^{p+1}dx\to 0$ as $\eps\to 0$;
\item[(ii)]if $p=2^*-1$, then $S(v_{\eps})\to \mathcal{S}$ as $\eps\to 0$ and  $\displaystyle\int_{\Om}  v_{\eps}^{p+1}dx=1$,
\end{itemize}
where $\mathcal{K}$ and $\mathcal{S}$ are defined as in \eqref{a2} and \eqref{go} respectively.
\end{proposition}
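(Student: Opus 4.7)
My plan is to construct $v_\eps$ as a minimizer of a constrained variational problem, with the Lagrange multiplier providing $\la_\eps$. I will set
$$J_\eps(u):=\f12\|u\|_{X_0}^2+\f{\eps}{q+1}\int_\Om|u|^{q+1}dx,\quad \M_\rho:=\Big\{u\in X_0(\Om)\cap L^{q+1}(\Om):\int_\Om|u|^{p+1}dx=\rho^{p+1}\Big\},$$
and $\mu_\eps(\rho):=\inf_{\M_\rho}J_\eps$. A minimizer (replaced by $|u|$) will satisfy the Euler-Lagrange equation \eqref{eq:eps_la} with $\la_\eps=\rho^{-(p+1)}(\|v_\eps\|_{X_0}^2+\eps\|v_\eps\|_{L^{q+1}}^{q+1})$, identified by testing against $v_\eps$ itself. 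The parameter $\rho=\rho_\eps$ will be selected to realize the two asymptotic regimes: $\rho_\eps\equiv 1$ in the critical case, and $\rho_\eps\to 0$ at a rate compatible with keeping $\la_\eps$ bounded in the supercritical case. Positivity of $v_\eps$ will come from nonlocal regularity combined with the strong maximum principle.

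The main technical step is the attainment of the infimum. The scaling identity $J_\eps(\rho w)=\rho^2 J_{\eps\rho^{q-1}}(w)$ reduces the problem to a minimization on $\M_1$ with a renormalized parameter. For a minimizing sequence $(u_n)\subset\M_1$, the two terms of $J$ jointly bound $(u_n)$ in $X_0$ and $L^{q+1}$, and I extract a weak limit $u$ in both spaces. The compact embedding $X_0\hookrightarrow L^r$ for every $r<2^*$ yields strong $L^r$ convergence; combined with the interpolation inequality
$$\|u_n-u\|_{L^{p+1}}\le\|u_n-u\|_{L^r}^{\theta}\|u_n-u\|_{L^{q+1}}^{1-\theta},\qquad \f{1}{p+1}=\f{\theta}{r}+\f{1-\theta}{q+1},$$
and the uniform $L^{q+1}$ bound, this upgrades to strong $L^{p+1}$ convergence, so the constraint passes to the limit and $u\in\M_1$. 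Weak lower semicontinuity of the two terms of $J$ then gives a minimizer.

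For $A<\la_\eps<B$, I would argue as follows. The lower bound is immediate in the critical case from Sobolev, $\la_\eps\ge\|v_\eps\|_{X_0}^2\ge\mathcal S$, and follows in the supercritical case from the interpolation estimate above. For the upper bound, I would test $J_\eps$ on a competitor in the appropriate $\M_\rho$: a cutoff of the fractional Aubin-Talenti bubble $U_{\delta,x_0}$ at an interior $x_0\in\Om$ in the critical case, and a cutoff of an entire ground state of $(-\De)^sw=w^p-w^q$ on $\Rn$ (available from \cite{BM}) in the supercritical case. Sharpening the first comparison gives $\mu_\eps(1)\to\mathcal S/2$, so that $S(v_\eps)=\|v_\eps\|_{X_0}^2\to\mathcal S$, while sharpening the second gives $F(v_\eps)\to\mathcal K$ after the rescaling $v_\eps=\rho_\eps w_\eps$. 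The constraint itself reads $\int_\Om v_\eps^{p+1}dx=\rho_\eps^{p+1}$, equal to $1$ (critical) or tending to $0$ (supercritical) by the choice of $\rho_\eps$.

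The hardest step will be the compactness argument in the supercritical regime: since $p+1>2^*$, the embedding $X_0\hookrightarrow L^{p+1}$ is neither compact nor continuous, so the standard direct method in $X_0$ alone cannot preserve the constraint. The $\eps L^{q+1}$-term in $J_\eps$ is not a lower-order perturbation but plays a double pivotal role, suppressing bubble-type concentration of the minimizing sequence (a Talenti bubble has blowing-up $L^{q+1}$ norm when $q+1>2^*$) and unlocking the interpolation trick that recovers strong $L^{p+1}$ convergence. A further delicacy, specific to the supercritical case, is the exponent count needed to choose $\rho_\eps\to 0$ such that $\la_\eps$ remains in $(A,B)$ while simultaneously $F(v_\eps)\to\mathcal K$; this hinges on the fact that $D:=2s(p+1)-N(p-1)<0$ precisely when $p>2^*-1$.
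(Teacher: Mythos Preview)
Your plan is correct and, once the dust settles, coincides with the paper's argument through an explicit change of variables. The paper absorbs all of the $\eps$-dependence into a domain dilation: setting $\Om_\eps=\eps^{-\frac{p-1}{2s(q-p)}}\Om$, it minimizes the $\eps$-free functional $\hat F(w)=\tfrac12\|w\|_{X_0}^2+\tfrac{1}{q+1}\int|w|^{q+1}$ over $N_\eps=\{w\in X_0(\Om_\eps):\int|w|^{p+1}=1\}$, obtains the bounds $2S_\eps<\la_\eps<(q+1)S_\eps$ with $S_\eps:=\inf_{N_\eps}\hat F$, and reduces the asymptotics to the single statement ``$S_\eps\to\mathcal K$ (resp.\ $\mathcal S/2$) as $\Om_\eps\nearrow\R^N$'' (their Appendix Theorem~A.1). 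The minimizer is then pulled back by $v_\eps(x)=\eps^{-1/(q-p)}w_\eps(\eps^{-\frac{p-1}{2s(q-p)}}x)$. If you take $\rho_\eps^{p+1}=\eps^{-D/(2s(q-p))}$, this is literally your minimization of $J_\eps$ on $\M_{\rho_\eps}$: with that choice one has $J_\eps|_{\M_{\rho_\eps}}=\rho_\eps^{p+1}F|_{\M_{\rho_\eps}}$, so your $\mu_\eps(\rho_\eps)/\rho_\eps^{p+1}=\inf_{\M_{\rho_\eps}}F$ is exactly the paper's $S_\eps$, and the two-sided bound on $\la_\eps$ and the convergence $F(v_\eps)\to\mathcal K$ follow at once. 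What the paper's framing buys is that the ``exponent count'' you flag as delicate is hidden in the definition of $\Om_\eps$, and the competitor argument becomes ``place $W$ (or $U$) inside a large domain'' rather than a concentration estimate on a fixed domain. One small correction to your sketch: in the supercritical case the competitor cannot be a \emph{fixed} cutoff $\phi W$ of the entire ground state---that gives $F(\phi W)=\mathcal K+\delta$ with $\delta>0$ independent of $\eps$. You must use a concentrated profile $\phi W_\sigma$, $W_\sigma(x)=\sigma^{-2s/(p-1)}W(x/\sigma)$ with $\sigma\to0$, so that the cutoff error vanishes; this is the fixed-domain counterpart of the paper's ``$\Om_\eps\to\R^N$''.
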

\begin{proof}
Let $\Om_\eps=\frac{1}{\eps^{\frac{p-1}{2s(q-p)}}}\Om$ and $X_0(\Om_\eps)=\{w \in H^s(\Rn): w=0 \quad\text{in}\,\ \Rn \setminus \Om_\eps\}. $ Clearly $\Om_{\eps} \to \R^N$ as $\eps \to 0.$
Let us consider the manifold $N_\eps$ defined by:
$$N_\eps=\big\{w \in X_0(\Om_\eps)\cap L^{q+1}(\Om_\eps): \int_{\Om_\eps}w^{p+1}dx=1\big\}. $$
On $N_\eps, $ the functional $F$ can be written as:
\begin{eqnarray}\label{1}
 F(w)&=&\frac{1}{2}\int_{\Rn}\int_{\Rn}\frac{|w(x)-w(y)|^2}{|x-y|^{N+2s}}dxdy+\frac{1}{q+1}\int_{\Om_\eps}w^{q+1}dx\no\\
& =:& \hat{F}(w).
\end{eqnarray}
For $p\geq 2^*-1$, define
\begin{equation}\label{2}
 S_\eps:=\inf_{w \in N_\eps}\hat{F}(w)=\inf_{w \in N_\eps}F(w).
\end{equation}
Let $\{w_{n,\eps}\}\subset N_{\eps}$ be a minimizing sequence for \eqref{2}. Therefore, we have,
$$\hat{F}(w_{n,\eps}) \to S_\eps \,\text{as}\,\ n \to \infty, \int_{\Om_\eps}w_{n,\eps}^{p+1}dx=1. $$
Proceeding as in \cite[Theorem 1.5]{BM}, we can show that there exists $w_\eps \in X_0(\Om_\eps)\cap L^{q+1}(\Om_\eps)$ such that
$w_{n,\eps} \deb w_\eps$ in $X_0(\Om_\eps)$ and $w_{\eps}$ satisfies,

 $$(-\De)^s w_\eps=\la_{\eps}w_\eps^p -w_\eps^q \,\,\text{in }\,\, \Om_\eps \quad\text{and}\quad \hat{F}(w_{\eps})=S_{\eps}. $$

This yields,
$$\la_\eps=\int_{\Rn}\int_{\Rn}\frac{|w_{\eps}(x)-w_{\eps}(y)|^2}{|x-y|^{N+2s}}dxdy+\int_{\Om_\eps}w_\eps^{q+1}dx. $$
Since, $\hat{F}(w_\eps)=S_\eps$ we have, $2S_\eps <\la_\eps < (q+1)S_\eps. $ In Theorem A.1 (see Appendix), let $\rho=\eps^{\frac{-(p-1)}{2(q-1)}}, $
then $N_\rho$ and $S_{\rho}$ are exactly same as $N_\eps$ and $S_\eps$ defined here. Letting $\eps \to 0$ we have,
\begin{equation}\label{3}
 S_\eps \to \mathcal{K}\,\,\text{if}\,\ p>2^*-1,\,\ S_\eps \to \f{\mathcal{S}}{2}\,\,\text{if}\,\ p=2^*-1.
\end{equation}
Hence, there exists $\eps_0>0$ and $A,B>0$ such that $A< \la_\eps <B$ for all $\eps \in (0,\eps_0). $ Using the transformation 
$$v_{\eps}(x)= \eps^{-\frac{1}{(q-p)}} w_{\eps}(\eps^{-\frac{p-1}{2s(q-p)}}x),$$
we observe that $v_\eps$ is a solution of \eqref{eq:eps_la}. Moreover, $\displaystyle\int_{\Om_\eps}w_\eps^{p+1}dx=1$ implies  $\displaystyle\int_{\Om_\eps}v_\eps^{p+1}dx=
\eps^{\frac{p(N-2s)-(N+2s)}{2s(q-p)}}$.
Hence,

$$ \Iom v_\eps^{p+1}dx=1 \,\,\text{if}\,\,\ p=2^*-1$$ and
$$ \Iom v_\eps^{p+1}dx \to 0 \,\,\text{as}\,\,\ \eps \to 0, \, \, p> 2^*-1.$$
A simple calculation yields $$F(w_\eps)=\hat F(w_\eps)=F(v_\eps)\quad\text{when}\quad p>2^*-1,$$ where $F$ and $\hat F$ are defined as in \eqref{a21} and \eqref{1}. This along
with \eqref{3} and the fact that $F(w_\eps)=S_\eps$ implies
$$F(v_\eps) \to \mathcal{K}\,\,\text{if}\,\ p > 2^*-1.$$

Moreover when $p=2^*-1$,
$$\mathcal{S}\leq S(v_{\eps})\leq 2\hat{F}(v_{\eps}, \Om)=2\hat{F}(w_{\eps},\Om_{\eps})=2S_{\eps}\To\mathcal{S}.$$ 
Hence 
 $$S(v_{\eps})\to\mathcal{S} \quad\text{if}\ p=2^*-1.$$ This completes the proof.
 \end{proof}

{\bf Proof of Theorem \ref{main1}:}  Let $v_\eps$ and $\la_\eps$ be as in Proposition \ref{p:main1}. Define, $ u_\eps=\la_\eps^{\frac{1}{p-1}}v_\eps. $ Then it is easy to see that
$u_\eps$ satisfies $$(-\De)^s u_\eps=u_{\eps}^p -\eps \la_\eps^{\frac{-(q-1)}{p-1}}u_\eps \,\,\text{in}\,\ \Om. $$
Using the bounds on $\la_\eps$ from Proposition \ref{p:main1}, we can conclude that there exist solutions $u_n$  of problem \eqref{eq:a3'} along a sequence $\{\eps_n\}_{n \geq 1}$ of values $\eps$ which tends to $0$ 
as $n \to \infty$. Set $\la_n:=\la_{\eps_n}^{\frac{-1}{p-1}}. $
Thus, from Proposition \ref{p:main1} it follows 
$$F(\la_n u_n)\to \mathcal{K} \quad\text{and}\quad \displaystyle\int_{\Om}u_n^{p+1}\to 0 \quad\text{when}\quad p>2^*-1$$ and $$S(\la_nu_n)\to \mathcal{S} \quad\text{and}\quad A< \displaystyle\int_{\Om}u_n^{p+1}<B \quad\text{when}\quad p=2^*-1,$$ for some $A,B>0$. Since $S(\la_nu_n)=S(u_n)$,  theorem follows. 
\hfil{$\square$}

\section{\bf The case $p=2^*-1$ and the proof of Theorem \ref{main2}}
\begin{lemma}\lab{l:5-1}
Let $u_{\eps}$ be as in Theorem \ref{main2}. Then $\|u_{\eps}\|_{\infty}\rightarrow \infty$ as $\eps \rightarrow 0$. 
\end{lemma}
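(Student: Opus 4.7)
I would argue by contradiction: assume that along a subsequence $\|u_\eps\|_\infty \le M < \infty$. The goal is to extract a limit $u_0$ that solves the purely critical problem $(-\De)^s u_0 = u_0^p$ on $\Om$ with zero exterior data, and to show $u_0$ must be simultaneously nontrivial (from the $L^{p+1}$ lower bound) and trivial (from Pohozaev), yielding the contradiction.

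\textbf{Step 1 (compactness).} From $S(u_\eps) \to \mathcal S$ and $\int_\Om u_\eps^{p+1}\,dx < B$, the identity
\[
\|u_\eps\|_{X_0}^2 = S(u_\eps)\bigg(\int_\Om u_\eps^{p+1}\,dx\bigg)^{2/(p+1)}
\]
gives a uniform bound on $\|u_\eps\|_{X_0}$. Hence, up to a subsequence, $u_\eps \deb u_0$ in $X_0$ and $u_\eps \to u_0$ in $L^r(\Om)$ for every $r \in [1,2^*)$ by the compact embedding. The $L^\infty$ bound together with $u_\eps^p$ bounded and $\eps u_\eps^q \to 0$ uniformly lets me invoke Ros-Oton--Serra boundary regularity for fractional problems with bounded right-hand side, yielding a uniform $C^{0,\alpha}(\overline\Om)$ estimate; along a further subsequence $u_\eps \to u_0$ uniformly on $\overline\Om$, and in particular $\int_\Om u_0^{p+1}\,dx \ge A > 0$, so $u_0 \not\equiv 0$.

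\textbf{Step 2 (limit equation and Pohozaev).} Passing to the limit in the weak formulation (Definition \ref{def-1}), the function $u_0 \in X_0 \cap L^\infty(\Om)$ satisfies
\[
(-\De)^s u_0 = u_0^p \text{ in } \Om, \qquad u_0 = 0 \text{ in } \Rn \setminus \Om, \qquad u_0 \ge 0.
\]
Apply the Pohozaev identity \eqref{poho} with $y=0$ (which $\Om$ is star-shaped about), $f(u) = u^p$, $F(u) = u^{p+1}/(p+1)$. Since $p+1 = 2^* = 2N/(N-2s)$, the bracket $(2s-N) + 2N/(p+1) = (2s-N) + (N-2s) = 0$, so the left-hand side vanishes. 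Thus
\[
\Ga(1+s)^2 \int_{\pa\Om} \bigg(\f{u_0(x)}{d^s(x)}\bigg)^2 \langle x, \nu(x)\rangle\,dS(x) = 0.
\]
Strict star-shapedness gives $\langle x,\nu(x)\rangle > 0$ on $\pa\Om$, so $u_0/d^s \equiv 0$ on $\pa\Om$; combined with nonnegativity and the fractional Hopf lemma this forces $u_0 \equiv 0$, contradicting Step 1.

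\textbf{Main obstacle.} The only delicate point is ensuring enough regularity on $u_\eps$ (and hence on $u_0$) so that the Pohozaev identity \eqref{poho} is applicable and the boundary quotient $u_0/d^s$ is well defined. This relies on the $L^\infty$ bound (assumed for contradiction) together with the Ros-Oton--Serra optimal boundary regularity for the fractional Laplacian applied to the equation with bounded right-hand side $u_\eps^p - \eps u_\eps^q$; modulo this standard input the rest is essentially bookkeeping.
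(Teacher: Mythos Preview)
Your proposal is correct and follows essentially the same route as the paper: assume a uniform $L^\infty$ bound, use fractional regularity (Ros-Oton--Serra) to extract a convergent subsequence, pass to the limit to obtain a nontrivial bounded solution of the pure critical problem on the star-shaped domain, and derive a contradiction from Pohozaev. The only cosmetic difference is that the paper invokes the ready-made nonexistence statement \cite[Corollary~1.3]{RS} directly, whereas you unpack that corollary by writing out the Pohozaev identity and appealing to the fractional Hopf lemma to conclude $u_0\equiv 0$; the content is the same.
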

\begin{proof}
Note that as $u_{\eps}\in C(\bar\Om)$  (see \cite[Theorem 1.2]{BM}), for each fixed $\eps>0$, we have $\|u_{\eps}\|_{\infty}<\infty$.  Furthermore, since $u_{\eps}$ is as in Theorem \ref{main2}, we have
\be \label{zig1} \displaystyle\int_{\Om} u_{\eps}^{2^{\star}} dx=c,\ee where $c\in(A, B)$.
Suppose, $\|u_{\eps}\|_{\infty}$ is uniformly bounded. Therefore, by the Schauder estimate (see \cite{RS2}, \cite{RS1}), $u_{\eps}\rightarrow u$ in $C^{2s-\de}_{loc}(\Om)\cap C^{s-\de}(\Rn)$, for any $\de>0$. By the definition of weak solution, we have
\be\lab{sep-8-1}
\int_{\Rn}\int_{\Rn}\f{(u_{\eps}(x)-u_{\eps}(y))(\va(x)-\va(y))}{|x-y|^{N+2s}}dxdy=\int_{\Om}(u_{\eps}^{2^*-1}-\eps u^{q}_{\eps})\va dx \quad\forall\ \va\in C^{\infty}_0(\Om).
\ee
Moreover, as $||u_{\eps}||_{{C^s}(\Rn)}$ is uniformly bounded (see \cite[Proposition 1.1]{RS1}), we get 
$$\f{(u_{\eps}(x)-u_{\eps}(y))(\va(x)-\va(y))}{|x-y|^{N+2s}}\leq C\f{|x-y|^{s}|\na\va|_{L^{\infty}}|x-y|}{|x-y|^{N+2s}}\leq C\f{1}{|x-y|^{N-1+s}}.$$
Therefore using the dominated convergence theorem, we can pass to the limit in \eqref{sep-8-1} and get, 
\begin{equation}
  \label{b1}
  \left\{
    \begin{aligned}
     (-\De)^s u&= u^{2^*-1}\; &&\text{in } \Om,\\
 u &\geq 0 &&\text{in }  \Om,  \\
u &= 0 &&\text{in } \Rn\setminus\Om,
    \end{aligned}
  \right.
\end{equation}
 where \be \label{zig11} \displaystyle A<\int_{\Om} u^{2^{\star}} dx<B.\ee
As $A>0$, the above expression implies  $u$ is a nontrivial solution in a bounded star-shaped domain. Since, $u\in C(\Rn)$ and $u=0$ in $\Rn\setminus\Om$, clearly $u$ is a bounded solution.  By the maximum principle (\cite[Proposition 2.17]{S}), we also have $u>0$ in $\Om$. This gives 
a contradiction due to the Pohozaev identity \cite[Corollary 1.3]{RS}. Hence the lemma follows.
\end{proof}

\noi Let $x_{\eps}$ be a local maximum point of $u_{\eps}$ and $\ga_{\eps}\in\R^+$ such that \be\lab{ga-eps}
u_{\eps}(x_{\eps})=\|u_{\eps}\|_{\infty}= \gamma_{\eps}^{-\frac{N-2s}{2}}.
\ee
Then  $\gamma_{\eps}\rightarrow 0$ as $\eps\rightarrow 0$.

\begin{lemma}\label{intblow-up} (Blow-up at an interior point)
Let $x_0:=\displaystyle{\lim_{\eps \to 0} }x_{\eps}$, then\  $x_0$ is an interior point of $\Om$.
\end{lemma}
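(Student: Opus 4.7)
Suppose, toward a contradiction, $x_0 \in \pa\Om$. Set $\ga_\eps := \|u_\eps\|_\infty^{-2/(N-2s)}$; by Lemma \ref{l:5-1}, $\ga_\eps \to 0$. Rescale
\begin{equation*}
v_\eps(y) := \ga_\eps^{(N-2s)/2}\,u_\eps(x_\eps + \ga_\eps y),\qquad y\in\tilde\Om_\eps := \ga_\eps^{-1}(\Om - x_\eps).
\end{equation*}
Then $\|v_\eps\|_\infty = v_\eps(0) = 1$, $v_\eps = 0$ in $\R^N\setminus\tilde\Om_\eps$, and
\begin{equation*}
(-\De)^s v_\eps \;=\; v_\eps^p - \eps\,\ga_\eps^\al\, v_\eps^q \text{ in } \tilde\Om_\eps, \qquad \al := \tfrac{N+2s-q(N-2s)}{2}<0.
\end{equation*}
Since $v_\eps$ and the right-hand side are uniformly bounded, the boundary H\"older theory of Ros-Oton and Serra \cite{RS1} yields uniform equicontinuity; along a subsequence, $v_\eps \to v$ locally uniformly on $\R^N$.

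Let $d_\eps := \text{dist}(x_\eps, \pa\Om)$; by hypothesis $d_\eps \to 0$. Up to rotation and a further subsequence, $d_\eps/\ga_\eps \to \ell \in [0,\infty]$. If $\ell < \infty$, $\tilde\Om_\eps$ exhausts a half-space $H$ and $v$ is a positive bounded solution of $(-\De)^s v = v^p$ in $H$ with $v = 0$ outside and $v(0) = 1$; such solutions are excluded by the Pohozaev identity on half-spaces (\cite[Cor.\ 1.4]{RS}), a contradiction. Hence $\ell = \infty$, $\tilde\Om_\eps \to \R^N$, and (after verifying that $\eps\ga_\eps^\al \to 0$; otherwise the limit would solve $(-\De)^s v = v^p - cv^q$ on $\R^N$ with $c > 0$, forbidden by the Pohozaev identity on $\R^N$) $v$ solves $(-\De)^s v = v^p$ on $\R^N$ with $v(0) = 1$. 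By \cite{CLO,LZ}, $v$ is a (scaled) Talenti bubble.

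To conclude, apply the Pohozaev identity \eqref{poho} to $u_\eps$ with $y = 0$. Since $p = 2^*-1$, the critical terms cancel and
\begin{equation*}
\eps\,\tfrac{q(N-2s)-(N+2s)}{q+1}\int_\Om u_\eps^{q+1}\,dx \;=\; \Ga(1+s)^2\int_{\pa\Om}\Big(\tfrac{u_\eps}{d^s}\Big)^{\!2}\langle x,\nu\rangle\,dS.
\end{equation*}
The Nehari identity $[u_\eps]_{X_0}^2 = \int u_\eps^{p+1} - \eps\int u_\eps^{q+1}$, combined with $S(u_\eps)\to\mathcal{S}$ and the Talenti identity $[v]^2 = \int v^{p+1}$ for the limiting profile, forces $\eps \int u_\eps^{q+1}\to 0$, so the left-hand side vanishes. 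On the other hand, the convergence $v_\eps \to v$ and the Green representation $u_\eps = \int_\Om G(\cdot,y)(u_\eps^p - \eps u_\eps^q)\,dy$ yield the matched asymptotic $u_\eps(x) \simeq \ga_\eps^{(N-2s)/2}\big(\int_{\R^N} v^p\big)G(x,x_\eps)$ for $|x - x_\eps| \gg \ga_\eps$; using strict star-shapedness ($\langle x,\nu\rangle \geq c_0 > 0$) and the fractional kernel estimate $G(x^*,y)/d^s(x^*) \simeq d(y)^s/|x^*-y|^N$ from \cite{RS1}, a direct computation near $x_0$ gives
\begin{equation*}
\int_{\pa\Om}\Big(\tfrac{u_\eps}{d^s}\Big)^{\!2}\langle x,\nu\rangle\,dS \;\gtrsim\; \ga_\eps^{N-2s}\,d_\eps^{2s-N-1}.
\end{equation*}
Since $N > 2s$ gives $2s - N - 1 < 0$, the right-hand side blows up when $d_\eps$ decays faster than the crossover $\ga_\eps^{(N-2s)/(N+1-2s)}$, contradicting the vanishing of the left-hand side; the complementary slow-decay regime is disposed of by a parallel Pohozaev identity with $y = x_\eps$, for which the boundary integrand $\langle x-x_\eps,\nu\rangle \sim d_\eps + O(|x-x_0|^2)$ gives a sharper lower bound of order $\ga_\eps^{N-2s}d_\eps^{2s-N}$ and closes the remaining case. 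Either way we obtain a contradiction, so $x_0 \in \Om$.

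The principal technical obstacle is the sharp uniform matched-asymptotic analysis of $u_\eps/d^s$ along $\pa\Om$: one must precisely join the inner Talenti profile (at scale $\ga_\eps$ around $x_\eps$) with the outer Green-function expansion, uniformly in $\eps$, and especially when $x_\eps$ approaches $\pa\Om$, so that the lower bound on the boundary integral is sharp enough in every regime of $d_\eps/\ga_\eps$. This relies crucially on the boundary Schauder theory for the fractional Laplacian developed in \cite{RS1}.
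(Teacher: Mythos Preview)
Your approach---blow-up analysis followed by a Liouville/Pohozaev dichotomy---is a recognized strategy, but it is entirely different from the paper's, and as written it has real gaps.

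The paper's proof is short and uses no rescaling at all. One tests the equation against the first Dirichlet eigenfunction $\va_1$ to obtain a uniform bound $\int_{\Om'} u_\eps \leq C(\Om')$ on every compact $\Om' \Subset \Om$. Then the fractional moving-plane method (Felmer--Wang \cite{FW}, Jarohs--Weth \cite{JW}) shows that in a boundary collar $O(\de)$ each $u_\eps$ is monotone along inward directions; for every $x\in O(\de)$ there is a cone $\Ga_x \subset I(\de/2)$ of fixed measure on which $u_\eps \geq u_\eps(x)$, and averaging over $\Ga_x$ together with the $L^1$ bound gives a \emph{uniform} $L^\infty$ bound on $O(\de)$. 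Since $u_\eps(x_\eps)\to\infty$, the maxima $x_\eps$ cannot enter $O(\de)$, and the lemma follows immediately.

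Your argument has two substantive problems. First, invoking \cite[Cor.~1.4]{RS} for half-space nonexistence is not correct: the Ros-Oton--Serra Pohozaev identity is proved for bounded $C^{1,1}$ domains, and using it on a half-space requires decay of $v$ at infinity that you have not established (your limit $v$ is only known to be bounded). A half-space Liouville theorem in the fractional critical case needs a separate reference and hypotheses you do not verify. Second, and more seriously, your treatment of the case $\ell = \infty$ is not a proof. You assert a lower bound of the form $\int_{\pa\Om}(u_\eps/d^s)^2 \langle x,\nu\rangle\,dS \geq c\,\ga_\eps^{N-2s} d_\eps^{\,2s-N-1}$ without derivation, concede it covers only one regime of $d_\eps$, and dispose of the complementary regime with a one-line ``parallel Pohozaev identity with $y=x_\eps$'' that you never carry out. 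You yourself flag the matched-asymptotic control of $u_\eps/d^s$ uniformly on $\pa\Om$ as ``the principal technical obstacle''---and rightly so: in the paper that control (Lemma~\ref{p7.1}) is obtained only \emph{after} the present lemma, using the interior location of $x_0$ as an input. Trying to derive it first, with $x_\eps$ drifting toward $\pa\Om$, is a harder problem than the lemma itself.
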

\begin{proof}
Let $\la_1$ be the first eigenvalue of $(-\De)^s$ in $\Om$ and $\va_1$ be a corresponding eigenfunction (see \cite{SerVal2}), that is, $\va_1$ satisfies
\bea
(-\De)^s \va_1&=&\la_1\va \quad\text{in}\quad\Om,\no\\
\va_1&=&0 \quad\text{in}\quad\Rn\setminus\Om.\no
\eea
Moreover, as $u_{\eps}$ is a classical solution (see \cite[Proposition 3.1]{BM}) 
\bea
\la_1\Iom \va_1 u_{\eps}dx=\Iom(-\De)^s\va_1 u_{\eps}dx=\int_{\Rn} (-\De)^s\va_1 u_{\eps} dx &=&\int_{\Rn} \va_1(-\De)^s u_{\eps} dx \no\\
&=&\Iom \va_1 u_{\eps}^{2^*-1}dx-\Iom\va_1 u_{\eps}^qdx\no\\
&\leq& \Iom \va_1 u_{\eps}^{2^*-1}dx\no\\
&\leq&\bigg(\Iom u_{\eps}^{2^*}dx\bigg)^\f{2^*-1}{2^*}\bigg(\Iom \va_1^{2^*}dx\bigg)^\f{1}{2^*}\no\\
&\leq& B^\f{2^*-1}{2^*}\bigg(\Iom \va_1^{2^*}dx\bigg)^\f{1}{2^*}\leq C',
\eea
for some constant $C'$. Hence $\displaystyle\Iom \va_1 u_{\eps}dx\leq\f{C'}{\la_1}$. Since, $\va_1\geq C$ on $\Om'\subset\subset\Om$, we obtain 
\be\lab{Sep-5-1}
\int_{\Om'}u_{\eps}\leq C(\Om'), 
\ee
for any $\Om'\subset\subset\Om$. \\
Define $$O(\de):=\{z\in\Om:\text{dist}(z,\pa\Om)<\de\}$$ and
$$I(\de):=\{z\in\Om:\text{dist}(z,\pa\Om)>\de\}.$$

{\bf Claim:} There exists $C>0$ such that 
$$\sup_{O(\de)}u_{\eps}(x)\leq C \quad\forall\ \eps>0.$$

 If $\Om$ is strictly convex, the moving plane argument , which is given in the proof of \cite[Theorem 3.1]{FW} (also see \cite[Corollary 1.2]{JW}) yields the fact that each solution $u_{\eps}$ increases along an arbitrary straight line toward inside of $\Om$ emanating from a point on $\pa\Om$. (see for instance \cite[Lemma 3.1]{KCL}).  Hence following an argument as in \cite{Z},  we can find $\ga, \de>0$ such that for any $x\in O(\de)$, there exists a measurable set $\Ga_x$ with (i) $\text{meas}(\Ga_x)\geq\ga$, (ii) $\Ga_x\subset I(\f{\de}{2})$, and (iii) $u_{\eps}(y)\geq u_{\eps}(x)$ for any $y\in\Ga_x$. In particular, $\Ga_x$ can be taken as a cone with vertex at $x$. Let $\Om'=I(\f{\de}{2})$. Then for any 
$x\in O(\de)$, we have
$$u_{\eps}(x)\leq\f{1}{\text{meas}(\Ga_x)}\int_{\Ga_x}u_{\eps}(y)dy\leq\ga^{-1}\int_{\Om'}u_{\eps}\leq C(\Om').$$
This proves the claim when $\Om$ is strictly convex. The general case can be proved using Kelvin transform in the extended domain (see, for instance, \cite{Z}, \cite {KCL}, \cite{C}).

From Lemma \ref{l:5-1}, we have $u_{\eps}(x_{\eps})\to\infty$ as $\eps\to 0$. On the other hand, the above claim implies $u_{\eps}$ is uniformly bounded near the boundary for all small $\eps>0$. Hence passing to a subsequence, the point $x_{\eps}$ converges to an interior point $x_0\in\Om$.

\end{proof}

 Define
\begin{equation}\label{z-eps}
z_{\eps}(x)=\gamma_{\eps}^{\frac{N-2s}{2}}u_{\eps} (\gamma_{\eps} x+x_{\eps}).
\end{equation}
Then $\|z_{\eps}\|_{\infty}=1$ and satisfies 
\begin{equation}
  \label{1.41}
  \left\{
    \begin{aligned}  (-\De)^s z_{\eps}& =  z_{\eps}^{2^*-1}-  \eps \gamma_{\eps}^{\f{(N+2s)-q(N-2s)}{2}}  z_{\eps}^{q}  &&\text{ in }    \Om_{\eps}, \\
  z_{\eps}&> 0 &&\text{ in }  \Om_{\eps},\\ z_{\eps}& =0 &&\text{ in }    \Rn\setminus \Om_{\eps},
 \end{aligned}
  \right.
\end{equation}
where $\Om_{\eps}= \f{\Om-x_{\eps}}{\gamma_{\eps}}$.

\begin{lemma}\label{l:Z}
Suppose $z_{\eps}$ is as in \eqref{z-eps}. Then
\begin{itemize}
\item[(i)] $\lim_{\eps\to 0}\eps \gamma_{\eps}^{\f{(N+2s)-q(N-2s)}{2}} =0$\\
\item[(ii)] There exists $Z\in D^{s,2}(\Rn)$ such that  $z_{\eps} \rightarrow Z$ in $C^{2s-\de}_{loc}(\R^N)$ as $\eps \rightarrow 0$, for any $\de>0$.\\
\item[(iii)] $Z$ satisfies Eq. \eqref{ent} and $Z(x)=\bigg[1+\f{|x|^2}{\mu_{N,s}}\bigg]^{-\f{N-2s}{2}}$, 
where $\mu_{N,s}=c_{N,s}^\f{4}{N-2s}$.

\end{itemize}
\end{lemma}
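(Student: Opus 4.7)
Rewrite \eqref{1.41} as $(-\Delta)^s z_\eps = z_\eps^{2^*-1}-\mu_\eps z_\eps^q$ with $\mu_\eps := \eps\gamma_\eps^{((N+2s)-q(N-2s))/2}$, so that the three assertions amount to: $\mu_\eps\to 0$, $z_\eps\to Z$ in $C^{2s-\delta}_{loc}$, and $Z$ is the explicit bubble. My plan is to extract a subsequential limit $Z\in D^{s,2}(\R^N)$, to rule out $c:=\lim\mu_\eps>0$ by a Pohozaev identity on $\R^N$, and finally to identify $Z$ via the classification of Chen--Li--Ou.

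\textbf{Compactness and boundedness of $\mu_\eps$.} Since $z_\eps$ attains its maximum at the origin with $z_\eps(0)=1$, the integral representation \eqref{De-u} gives $(-\Delta)^s z_\eps(0)\geq 0$; evaluating the equation there yields $1-\mu_\eps\geq 0$, so $\mu_\eps\in[0,1]$. Hence the right-hand side of \eqref{1.41} is uniformly bounded in $L^\infty$. The Gagliardo seminorm of $z_\eps$ equals that of $u_\eps$ by scale invariance, and the latter is bounded by \eqref{nov-1-1}. Because $x_\eps\to x_0\in\mathrm{int}(\Om)$ by Lemma \ref{intblow-up} and $\gamma_\eps\to 0$, the domain $\Om_\eps$ exhausts $\R^N$, so the interior Hölder estimates of Ros-Oton--Serra (exactly as used in the proof of Lemma \ref{l:5-1}) apply on any fixed ball and yield uniform $C^{2s-\delta}_{loc}(\R^N)$ bounds for $z_\eps$. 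Arzela--Ascoli together with a diagonal extraction then produces a subsequence along which $z_\eps\to Z$ in $C^{2s-\delta}_{loc}(\R^N)$, and weak lower semicontinuity of the Dirichlet energy puts $Z\in D^{s,2}(\R^N)$, with $Z(0)=1$ and $0\leq Z\leq 1$; this already proves (ii).

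\textbf{Killing the supercritical perturbation and identifying $Z$.} Passing to a further subsequence, $\mu_\eps\to c\in[0,1]$. Dominated convergence in the weak formulation (justified exactly as in the passage to \eqref{b1} in Lemma \ref{l:5-1}) gives
\[(-\Delta)^s Z = Z^{2^*-1} - cZ^q \quad\text{in }\R^N, \qquad Z\in D^{s,2}(\R^N),\ Z\geq 0.\]
The strong maximum principle upgrades $Z\geq 0$ to $Z>0$. The Pohozaev identity on $\R^N$ for this equation---justified by the decay-at-infinity estimates of \cite{BM}---asserts $\tfrac{N-2s}{2}\int Z f(Z)=N\int F(Z)$ with $f(t)=t^{2^*-1}-ct^q$ and $F(t)=\tfrac{t^{2^*}}{2^*}-c\tfrac{t^{q+1}}{q+1}$; after cancelling the critical terms this reduces to
\[c\,\frac{(N-2s)(q-2^*+1)}{2(q+1)}\int_{\R^N} Z^{q+1}\,dx = 0.\]
Since $q>2^*-1$ the coefficient is strictly positive, so either $c=0$ or $Z\equiv 0$; the latter contradicts $Z(0)=1$. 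Hence $c=0$, proving (i). With $c=0$, $Z>0$ is a $D^{s,2}$-solution of $(-\Delta)^s Z=Z^{2^*-1}$ on $\R^N$, hence by the uniqueness result \cite{CLO} recalled before \eqref{ent} $Z$ is a translate and rescaling of $U$. As $0$ is the unique maximum of $Z$, no translation is needed; imposing $Z(0)=1$ forces the scaling parameter $\lambda$ to satisfy $c_{N,s}\lambda^{-(N-2s)/2}=1$, i.e.\ $\lambda^2=c_{N,s}^{4/(N-2s)}=\mu_{N,s}$, and a direct substitution delivers the explicit formula in (iii).

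\textbf{Main obstacle.} The technical heart is the Pohozaev identity on the full space: one must ensure that $Z$ decays rapidly enough at infinity so that the boundary contributions (which would appear if one truncated at $\partial B_R$ and integrated by parts) vanish in the limit $R\to\infty$. This is precisely where the asymptotic analysis of \cite{BM} is indispensable; without it, one could not exclude a genuine two-power entire profile as the blow-up limit.
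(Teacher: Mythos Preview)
Your proof is correct and follows essentially the same approach as the paper: bound $\mu_\eps\le 1$ via the maximum at the origin, extract a $C^{2s-\delta}_{loc}$ subsequential limit $Z\in D^{s,2}(\R^N)$ using Schauder estimates, rule out $c>0$ by the Pohozaev identity from \cite{BM}, and identify $Z$ through the classification of entire bubbles. The only cosmetic difference is that the paper secures $Z\in L^{q+1}(\R^N)$ (needed to invoke \cite[Theorem 1.4]{BM}) by testing the limit equation with $Z$ itself rather than by appealing to decay estimates.
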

\begin{proof} Using Lemma \ref{intblow-up}, we obtain $\Om_{\eps}\mapsto \R^N$ as $\eps \to 0.$
We know $z_{\eps}$ satisfies Eq.\eqref{1.41}. Note that, $\max_{\Om}u_{\eps}(x)=u_{\eps}(x_\eps)$ implies $z_{\eps}$ attains maximum at $0$ and $z_{\eps}(0)=1$. Therefore,  applying the definition of fractional Laplace operator, it is easy to see that $(-\De)^s z_{\eps}(0)\geq 0$. Thus from \eqref{1.41}, we have $1-  \eps \gamma_{\eps}^{\f{(N+2s)-q(N-2s)}{2}}\geq 0$. This in turn implies, $\lim_{\eps\to 0}\eps \gamma_{\eps}^{\f{(N+2s)-q(N-2s)}{2}}\in[0,1]$. Consequently, using Schauder estimate \cite{RS1}, $z_{\eps}\to Z$ in $C^{2s-\de}_{loc}(\Rn)$, for some $\de>0$. 
Let $\phi\in C^{\infty}_0(\Rn)$. Thus, $\phi\in C^{\infty}_0(\Omega_{\eps})$ for $\eps$ small. Taking $\phi$ as the test function, from Eq.\eqref{1.41} we have 
\bea
\displaystyle\int_{\Rn}\int_{\Rn}\f{(z_{\eps}(x)-z_{\eps}(y))(\phi(x)-\phi(y))}{|x-y|^{N+2s}}dxdy&=&\int_{\Om_{\eps}}z_{\eps}^{2^{*}-1}\phi dx\no\\
&-&\eps\ga_{\eps}^{\f{(N+2s)-q(N-2s)}{2}}\int_{\Om_{\eps}}z_{\eps}^q\phi dx.
\eea
As $||z_{\eps}||_{L^{\infty}}=1$ and $\phi$ has compact support, using dominated convergence theorem as in the proof of Lemma \ref{l:5-1}, we can pass  to the limit $\eps\to 0$ in the above integral identity and obtain 
\begin{equation}
  \label{ent-1'}
\left\{\begin{aligned}
      (-\Delta)^s Z&=Z^{2^*-1}-cZ^{q} \quad\text{in }\quad \R^N,\\ 
      0&<Z\leq 1,   \quad\text{in }\quad \R^N, \quad Z(0)=1,\\ 
       \end{aligned}
  \right.
\end{equation}
where $c=\lim_{\eps\to 0}\eps \gamma_{\eps}^{\f{(N+2s)-q(N-2s)}{2}}$.
 Since $z_{\eps}\in H^s(\Om_{\eps})$ and $z_{\eps}=0$ in $\Rn\setminus\Om_{\eps}$, multiplying \eqref{1.41} by $z_{\eps}$ and integrating over $\Rn$, we have
$$||z_{\eps}||_{D^{s, 2}(\Rn)}^2=\int_{\Rn}z_{\eps}^{2^*}dx-\eps \gamma_{\eps}^{\f{(N+2s)-q(N-2s)}{2}} \int_{\Rn}z_{\eps}^{q+1}dx\leq \int_{\Om_{\eps}}z_{\eps}^{2^*}dx<B.$$
Therefore, up to a subsequence $z_{\eps}\deb\tilde Z$ in $D^{s,2}(\Rn)$. By the uniqueness of limit, $Z=\tilde Z$. Thus $Z\in D^{s,2}(\Rn)$. Consequently, multiplying \eqref{ent-1'} by $Z$ and integrating over $\Rn$, we get $Z\in L^{q+1}(\Rn)$. Hence, if $c\not=0$, we get a contradiction by Pohozaev identity (see \cite[Theorem 1.4]{BM}). This implies $c=0$ and $Z$ satisfies
\eqref{ent}.  As a consequence, $Z$ must be of the form $\xi^{-\f{N-2s}{2}}U(\f{x}{\xi})$, for some $\xi>0$, where $U$ is as in \eqref{ent-U}. As, $\max_{\Om_{\eps}} z_{\eps}=z_{\eps}(0)=1$, we get $Z(0)=1$ and $0\leq Z\leq 1$.  Using this fact, it is easy to see that $\xi=c_{N,s}^\f{2}{N-2s}$, where $c_{N,s}$ is as defined in \eqref{c-ns}. From this, a computation yields $Z(x)=\bigg[1+\f{|x|^2}{\mu_{N,s}}\bigg]^{-\f{N-2s}{2}}$, 
where $\mu_{N,s}=c_{N,s}^\f{4}{N-2s}$.

\end{proof}

\vspace{2mm}
\noi Now we show that there exists $C>0$ independent of $\eps>0$ such that \begin{equation} \label{unif}z_{\eps} (x)\leq C Z(x) \text{ for all }x\in \Om_{\eps}.\end{equation}
The local behavior of $z_{\eps}$ is known. Next, we need to check the behavior of $z_{\eps}$ near $\infty.$ For this, define the Kelvin transform of  $z_{\eps}$ as \be \hat{z}_{\eps}(x)=|x|^{-(N-2s)} z_{\eps}\bigg(\frac{x}{|x|^2}\bigg) \text{ in } \Om_{\eps}\setminus \{0\}.\ee
From \eqref{1.41}, it follows that $\hat{z}_{\eps}$ satisfies 
\begin{equation}
  \label{1.34}
  \left\{
    \begin{aligned}  (-\De)^s\hat{z}_{\eps}& = \hat{z}_{\eps}^{2^*-1}-\eps \gamma_{\eps}^{\f{(N+2s)-q(N-2s)}{2}}|x|^{q(N-2s)-(N+2s)}   \hat{z}_{\eps} ^{q}\quad  \text{in}\quad  \Om^{\star}_{\eps}\\
    \hat{z}_{\eps}&=0\quad \text{in}\quad  \Rn\setminus \Om^{\star}_{\eps}.
\end{aligned}
  \right.
\end{equation}
where $\Om^{\star}_{\eps}$ is the image $\Om_{\eps}$ under the Kelvin transform. Hence the behavior of $z_{\eps}$ near $\infty$ amounts to study the behavior of  $\hat{z}_{\eps}$ near $0.$

\begin{lemma}\label{lb}
There exist $R>0$ and $C>0$ independent of $\eps>0$ such that any solution of \eqref{1.34} satisfy
\be\label{zig2} \|\hat{z}_{\eps}\|_{L^{\infty}(B_{r})}\leq C \bigg(\int_{B_{R}}  \hat{z}_{\eps}^{2^{\star}} dx\bigg)^{\frac{1}{2^{\star}}}.\ee
\end{lemma}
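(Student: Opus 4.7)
The plan is to linearize equation \eqref{1.34} and apply a fractional Moser iteration. First, I would rewrite the right-hand side of \eqref{1.34} as $V_\eps(x)\hat z_\eps$ with
\begin{equation*}
V_\eps(x) = \hat z_\eps(x)^{2^*-2} - \eps\, \gamma_\eps^{\f{(N+2s)-q(N-2s)}{2}}\, |x|^{q(N-2s)-(N+2s)} \hat z_\eps(x)^{q-1}.
\end{equation*}
Since $q > 2^*-1$, the exponent $q(N-2s)-(N+2s) > 0$, so the weight $|x|^{q(N-2s)-(N+2s)}$ is bounded on any $B_R$; combined with Lemma \ref{l:Z}(i) giving $\eps\gamma_\eps^{\f{(N+2s)-q(N-2s)}{2}}\to 0$, and the favourable negative sign of the second term in $V_\eps$, one obtains the subsolution inequality $(-\De)^s \hat z_\eps \leq \hat z_\eps^{2^*-1}$ in every ball $B_R \subset \Om^*_\eps$.

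Next, I would run the nonlocal Moser iteration. For $\beta\geq 1$ and a cutoff $\eta\in C_c^\infty(B_R)$, test the subsolution inequality with $\phi=\eta^2 \hat z_\eps^{2\beta-1}$, and use the standard algebraic inequality
\begin{equation*}
(a-b)\bigl(a^{2\beta-1}\eta_1^2 - b^{2\beta-1}\eta_2^2\bigr) \geq \f{1}{\beta}\bigl(\eta_1 a^\beta - \eta_2 b^\beta\bigr)^2 - C\bigl(a^{2\beta}+b^{2\beta}\bigr)(\eta_1-\eta_2)^2 \qquad (a,b\geq 0)
\end{equation*}
together with the fractional Sobolev embedding $X_0\hookrightarrow L^{2^*}$ to obtain
\begin{equation*}
\|\eta \hat z_\eps^\beta\|_{L^{2^*}(\R^N)}^2 \leq C\beta^2\bigg(\int_{B_R} \hat z_\eps^{2^*-2}\, \eta^2 \hat z_\eps^{2\beta}\, dx + \text{cutoff errors}\bigg).
\end{equation*}
H\"older's inequality with exponents $N/(2s)$ and $N/(N-2s)$ bounds the leading term by $\bigl(\int_{B_R}\hat z_\eps^{2^*}\bigr)^{2s/N}\|\eta \hat z_\eps^\beta\|_{L^{2^*}}^2$. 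If $R$ is chosen so that $\bigl(\int_{B_R}\hat z_\eps^{2^*}\bigr)^{2s/N} \leq \delta_0$ for a suitable threshold $\delta_0 = \delta_0(N,s)$, this term absorbs into the left-hand side, and iterating along exponents $\beta_k=(2^*/2)^k$ with shrinking radii $R_k\searrow r$ produces $\|\hat z_\eps\|_{L^\infty(B_r)} \leq C\|\hat z_\eps\|_{L^{2^*}(B_R)}$.

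The hard part is justifying, uniformly in $\eps$, the smallness of $\int_{B_R(0)}\hat z_\eps^{2^*}$ for some fixed small $R$. Via the isometry of the Kelvin transform,
\begin{equation*}
\int_{B_R(0)} \hat z_\eps^{2^*}\, dx = \int_{|y|>1/R} z_\eps^{2^*}(y)\, dy,
\end{equation*}
so the question reduces to uniform tightness at infinity of $\{z_\eps^{2^*}\}$. Under the hypotheses $S(u_\eps)\to\mathcal{S}$ and $A<\int u_\eps^{2^*}<B$, a profile decomposition (the nonlocal analogue of Struwe's compactness) forces a single bubble, and in particular $z_\eps\to Z$ strongly in $L^{2^*}(\R^N)$; since $Z\in L^{2^*}(\R^N)$ the tail estimate $\int_{|y|>M}Z^{2^*}\to 0$ as $M\to\infty$ yields the required uniform smallness. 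This non-concentration argument is the principal obstacle; once it is in hand, the fractional Moser iteration is a routine (if technical) adaptation of the Euclidean case.
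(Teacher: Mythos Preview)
Your approach is essentially the one the paper points to: the paper's proof is nothing more than a reference to the Moser iteration of \cite[Theorem~1.1]{BM} and \cite{TX}, and your write-up supplies the details of that iteration. In particular you correctly isolate the only non-routine point, namely the \emph{uniform} smallness of $\int_{B_R}\hat z_\eps^{2^*}$ needed for the absorption step; the paper elides this entirely and simply cites the references. Your route to this smallness via a profile decomposition is valid, but heavier than necessary: since $S(z_\eps)=S(u_\eps)\to\mathcal S$, $\|z_\eps\|_{D^{s,2}}^2-\|z_\eps\|_{L^{2^*}}^{2^*}\to 0$ (from the equation and Lemma~\ref{l:Z}(i)), and $z_\eps\rightharpoonup Z$ with $Z$ an extremal, a direct Brezis--Lieb computation already forces $z_\eps\to Z$ strongly in $L^{2^*}(\R^N)$, hence the tails $\int_{|y|>1/R}z_\eps^{2^*}=\int_{B_R}\hat z_\eps^{2^*}$ are uniformly small for $R$ small.
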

\begin{proof} The proof  follows along the same line of arguments as in \cite[Theorem 1.1]{BM} (see also \cite{TX}) with a suitable modification and we skip the proof.  \end{proof}

For \eqref{unif}, note that $\|z_{\eps}\|_{\infty}=1$ and this implies that $z_{\eps} \leq C Z(x) $ locally.
From \eqref{nov-1-1} and \eqref{z-eps},  it follows
\be\no  A< \int_{\Om_{\eps}}z_{\eps}^{2^*} dx<B. \ee But this implies that
\be\no \int_{B_{R}\cap\Om_{\eps}^*} \hat{z}_{\eps}^{2^{\star}} dx\leq\int_{\Om_{\eps}^*} \hat{z}_{\eps}^{2^{\star}} dx = \int_{\Om_{\eps}} z_{\eps}^{2^*} dx  <B. \ee
Consequently from Lemma \ref{lb}, we obtain $z_{\eps}(x)\leq\f{C}{|x|^{N-2s}}$ as $|x|\to\infty$.
Moreover, since at infinity $Z$ decays as $|x|^{-(N-2s)}$, we conclude $z_{\eps} \leq C Z(x) $ near infinity. Hence, we have $z_{\eps} \leq C Z(x) $ for all $x\in \Om_{\eps}.$
As a conclusion, from \eqref{z-eps} we obtain that there exists $C>0$ independent of $\eps$ such that \be\label{upps} u_{\eps}(x) \leq C \gamma_{\eps}^{-\frac{N-2s}{2}}Z\bigg(\frac{x-x_{\eps}}{\gamma_{\eps}}\bigg) .\ee

\noi Define $w_{\eps}(x)=\|u_{\eps}\|_{\infty}u_{\eps}(x)= \gamma_{\eps}^{-\frac{N-2s}{2}}u_{\eps}(x).$ Then $w_{\eps}$ satisfies
\begin{equation}
  \label{1.37}
  \left\{
    \begin{aligned}  (-\De)^s w_{\eps}& = \gamma_{\eps}^{-\frac{N-2s}{2}} u_{\eps}^{2^*-1}-\eps   \gamma_{\eps}^{-\frac{N-2s}{2}}   u_{\eps}^q \quad\text{ in  } \quad\Om\\
    w_{\eps}&=0 \quad\text{ in  } \quad \Rn\setminus \Om.
\end{aligned}
  \right.
\end{equation}

\begin{lemma}\label{GFC}   The Green function associated to the fractional Laplacian $(- \De)^s$ satisfy the following inequalities.
\begin{itemize}
\item[(i)]  $G(x, y)\leq \frac{C }{|x-y|^{N-2s}}$ and 
 \item[(ii)]$G(x, y)\leq \frac{C d^s(x)}{|x-y|^{N-s}}.$
\end{itemize} 
where $C>0$ is a constant depending on $\Om$ and $s$ and $N> 2 s.$ 
\end{lemma}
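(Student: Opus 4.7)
The plan is to prove the two estimates separately, relying on the decomposition $G = F - H$ from \eqref{eq:gr-H} together with maximum principle and boundary regularity arguments.

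For (i), I would observe that for each fixed $y \in \Omega$, the function $H(\cdot, y)$ solves $(-\De_x)^s H(\cdot, y) = 0$ in $\Omega$ with exterior data $H(\cdot, y) = F(\cdot, y) > 0$ on $\Rn \setminus \Omega$. By the maximum principle for the fractional Laplacian (cf.\ \cite[Proposition 2.17]{S}), this forces $H(x, y) \geq 0$ for all $x \in \Omega$. Consequently
$$G(x,y) = F(x,y) - H(x,y) \leq F(x,y) = \frac{a_{N,s}}{|x-y|^{N-2s}},$$
which is (i) with $C = a_{N,s}$.

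For (ii), I would split into two regimes according to the ratio $d(x)/|x-y|$. If $d(x) \geq \tfrac{1}{2}|x-y|$, then $d^s(x)/|x-y|^{N-s} \geq 2^{-s}/|x-y|^{N-2s}$, so (ii) is an immediate consequence of (i) after adjusting the constant. The real work lies in the complementary regime $d(x) < \tfrac{1}{2}|x-y|$, where $x$ lies much closer to $\pa \Omega$ than to $y$. In that case let $\bar x \in \pa \Omega$ be a closest boundary point to $x$ and set $r = |x-y|/4$. Then $G(\cdot, y)$ is a nonnegative $s$-harmonic function in $B_r(\bar x) \cap \Omega$ (since $y \notin B_r(\bar x)$) that vanishes identically on $B_r(\bar x) \setminus \Omega$. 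Applying a boundary regularity estimate for such functions (cf.\ the results of \cite{RS1}) yields
$$G(x, y) \leq C \bigg(\frac{d(x)}{r}\bigg)^s \sup_{B_r(\bar x) \cap \Omega} G(\cdot, y).$$
Since every point of $B_r(\bar x)$ lies at distance at least $3|x-y|/4$ from $y$, the supremum on the right is bounded by $C'/|x-y|^{N-2s}$ thanks to (i); combining the two bounds gives $G(x,y) \leq C d^s(x)/|x-y|^{N-s}$, which is (ii).

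The main obstacle is the boundary case in (ii): one has to invoke the correct boundary regularity/boundary Harnack type statement for nonnegative $s$-harmonic functions vanishing outside $\Omega$, applied at the right scale $r \sim |x-y|$, and check that the geometric configuration (smoothness of $\pa \Omega$, position of $y$ away from $B_r(\bar x)$) really matches the hypotheses of that estimate. The remaining steps are either direct applications of the maximum principle or elementary algebraic comparisons between $d(x)$ and $|x-y|$.
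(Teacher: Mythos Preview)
Your argument is essentially sound, but it is a genuinely different route from the paper's. The paper does not prove the lemma at all; it simply writes ``This follows from Chen and Song \cite[Theorem 1.1]{CS}'', where sharp two-sided bounds of the form
\[
G(x,y)\asymp \min\!\left\{\frac{1}{|x-y|^{N-2s}},\ \frac{d^s(x)d^s(y)}{|x-y|^{N}}\right\}
\]
are established for $C^{1,1}$ domains (and (i)--(ii) are immediate consequences). So the comparison is: the paper outsources the entire statement to a known sharp estimate, while you give a self-contained PDE argument via $G=F-H$, the maximum principle, and boundary regularity. Your approach has the advantage of being elementary and of showing exactly which ingredients are used (positivity of $H$, $C^s$ boundary behaviour), whereas the citation gives more (a matching lower bound and the extra factor $d^s(y)$) with no work.

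Two points to tighten in your sketch for (ii). First, with the threshold $d(x)<\tfrac12|x-y|$ and $r=\tfrac14|x-y|$ you do not actually get $x\in B_r(\bar x)$, and your claim that every point of $B_r(\bar x)$ lies at distance $\ge \tfrac34|x-y|$ from $y$ is off; both are fixed by taking the threshold $d(x)<\tfrac14|x-y|$ (the case $\tfrac14|x-y|\le d(x)$ is then absorbed into the easy regime). Second, and more importantly, the local boundary estimate for the \emph{nonlocal} operator in \cite{RS1} does not bound $G(x,y)$ by $(d(x)/r)^s$ times $\sup_{B_r(\bar x)\cap\Omega}G(\cdot,y)$ alone: there is a tail term of the type $r^{2s}\int_{\Rn\setminus B_r(\bar x)}G(z,y)\,|z-\bar x|^{-N-2s}\,dz$ that must also be controlled. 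Using (i) this tail is indeed $O(|x-y|^{-(N-2s)})$, so the argument closes, but you should make that step explicit rather than suppress it.
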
 

\begin{proof} This follows from Chen and Song  \cite[Theorem 1.1]{CS}.
\end{proof}

\begin{lemma} \label{p8} Let $w_{\eps}$ be as in \eqref{1.37}. Then for every $r>0$, there exists a constant $C=C(r)>0$ such that $$\|w_{\eps}\|_{L^\infty(\Om\setminus B_r(x_0))}\leq C.$$
\end{lemma}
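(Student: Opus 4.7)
The plan is to rely on the Green function representation together with the global pointwise bound \eqref{upps} that has already been established. Since $w_\eps\in C(\bar\Om)$ vanishes on $\Rn\setminus\Om$ and satisfies \eqref{1.37}, we may write
\[
w_\eps(x)=\int_{\Om}G(x,y)\Bigl[\gamma_\eps^{-(N-2s)/2}\,u_\eps^{2^*-1}(y)-\eps\,\gamma_\eps^{-(N-2s)/2}\,u_\eps^{q}(y)\Bigr]\,dy.
\]
Since $u_\eps>0$ and $\eps>0$, the second term is nonpositive, so
\[
0\le w_\eps(x)\le \gamma_\eps^{-(N-2s)/2}\int_{\Om}G(x,y)\,u_\eps^{2^*-1}(y)\,dy.
\]
Using \eqref{upps} and the identity $(N-2s)(2^*-1)/2=(N+2s)/2$, I would then get
\[
w_\eps(x)\le C\,\gamma_\eps^{-N}\int_{\Om}G(x,y)\,Z\!\Bigl(\frac{y-x_\eps}{\gamma_\eps}\Bigr)^{2^*-1}\,dy
= C\int_{\Om_\eps}G(x,\,x_\eps+\gamma_\eps z)\,Z(z)^{2^*-1}\,dz,
\]
after the rescaling $z=(y-x_\eps)/\gamma_\eps$.

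Fix $r>0$. By Lemma \ref{intblow-up}, $x_\eps\to x_0\in\Om$, so for $\eps$ small we have $|x_\eps-x_0|<r/4$, and hence for every $x\in\Om\setminus B_r(x_0)$, $|x-x_\eps|\ge 3r/4$. I would then split the $z$-integral into the two regions $A_\eps:=\{|\gamma_\eps z|\le r/4\}$ and its complement in $\Om_\eps$. On $A_\eps$ one has $|x-(x_\eps+\gamma_\eps z)|\ge r/2$, so Lemma \ref{GFC}(i) yields $G(x,x_\eps+\gamma_\eps z)\le C\,r^{-(N-2s)}$, and therefore
\[
\int_{A_\eps}G(x,x_\eps+\gamma_\eps z)Z(z)^{2^*-1}\,dz\le C(r)\int_{\Rn}Z(z)^{2^*-1}\,dz\le C(r),
\]
since $Z^{2^*-1}$ decays like $|z|^{-(N+2s)}$ and is integrable on $\Rn$.

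On the complementary region $|z|\ge r/(4\gamma_\eps)$, corresponding to $y\in\Om\setminus B_{r/4}(x_\eps)$, the decay of $Z$ gives the pointwise bound
\[
u_\eps^{2^*-1}(y)\le C\,\gamma_\eps^{(N+2s)/2}\,|y-x_\eps|^{-(N+2s)},
\]
so the contribution to $w_\eps(x)$ is at most
\[
C\,\gamma_\eps^{2s}\int_{\Om\setminus B_{r/4}(x_\eps)}\frac{G(x,y)}{|y-x_\eps|^{N+2s}}\,dy\le C(r)\,\gamma_\eps^{2s}\int_{\Om}G(x,y)\,dy\le C(r)\,\gamma_\eps^{2s},
\]
where in the last step I use $|y-x_\eps|\ge r/4$ and Lemma \ref{GFC}(i) together with the fact that $y\mapsto|x-y|^{-(N-2s)}$ is uniformly integrable over $\Om$. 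Since $\gamma_\eps^{2s}\to 0$ by Lemma \ref{l:5-1}, this term is $o(1)$. Combining the two estimates gives the desired uniform bound $\|w_\eps\|_{L^\infty(\Om\setminus B_r(x_0))}\le C(r)$.

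The main technical point, and the only step that requires some care, is verifying that the tail integral (where $y$ is not in a small ball around $x_\eps$) is uniformly bounded; this hinges on combining the $|z|^{-(N+2s)}$ decay of the rescaled profile $Z^{2^*-1}$ with the fact that $G(x,\cdot)$ is in $L^1(\Om)$ with a bound independent of $x$. All other ingredients---the Green function representation, the pointwise profile bound \eqref{upps}, and the change of variables---are standard once set up.
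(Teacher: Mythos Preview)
Your argument is correct and follows essentially the same strategy as the paper: Green function representation, the pointwise profile bound \eqref{upps}, and a near/far split around $x_\eps$ together with Lemma \ref{GFC}(i). The only notable difference is that you discard the $\eps\,u_\eps^q$ contribution by sign, whereas the paper estimates it separately (via the same split), and you split at a fixed radius $r/4$ rather than at the $x$-dependent radius $|x-x_\eps|/2$ used in the paper; both choices lead to the same conclusion.
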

\begin{proof}
From the Green function representation and Lemma \ref{GFC}  we have
\begin{eqnarray}|  w_{\eps}(x)|&\leq&   \gamma_{\eps}^{-\frac{N-2s}{2}}  \int_{\Om} G(x,y)u_{\eps}^{2^*-1} dy+\eps\gamma_{\eps}^{-\frac{N-2s}{2}}  \int_{\Om} G(x,y)u_{\eps}^{q} dy
\no \\&\leq& C \gamma_{\eps}^{-\frac{N-2s}{2}}   \int_{\Om}|x-y|^{2s-N}u_{\eps}^{2^*-1} dy+C\eps \gamma_{\eps}^{-\frac{N-2s}{2}}   \int_{\Om}|x-y|^{2s-N}u_{\eps}^{q} dy. \no
\end{eqnarray}
Moreover,
\begin{eqnarray*}\gamma_{\eps}^{-\frac{N-2s}{2}}  \int_{\Om}|x-y|^{2s-N}u_{\eps}^{2^*-1} dy&=& \gamma_{\eps}^{-\frac{N-2s}{2}}  \int_{\Om \cap B_{\frac{|x- x_{\eps}|}{2}}(x_{\eps})}|x-y|^{2s-N}u_{\eps}^{2^*-1} dy \\&+& \gamma_{\eps}^{-\frac{N-2s}{2}}  \int_{\Om \setminus \Om \cap B_{\frac{|x- x_{\eps}|}{2}}(x_{\eps})}|x-y|^{2s-N}u_{\eps}^{2^*-1} dy. \no
\end{eqnarray*}
Using \eqref{upps} along with that fact that $Z(x)= |x|^{-(N-2s)}$ at infinity, we have
$$\gamma_{\eps}^{-\frac{N-2s}{2}} |x-y|^{2s-N}u_{\eps}^{2^*-1}(y) \leq \frac{C\ga_{\eps}^{2s}}{|x-y|^{N-2s}|y-x_{\eps}|^{N+2s}} \quad\text{if}\  \ y\in \Om\setminus
B_{\frac{|x- x_{\eps}| }{2}}(x_{\eps})$$ and 
$$\eps \gamma_{\eps}^{-\frac{N-2s}{2}} |x-y|^{2s-N}u_{\eps}^{q}(y) dy  \leq \frac{C \eps \ga_{\eps}^{\frac{(N-2s)(q-1)}{2}}}{|x-y|^{N-2s}|y-x_{\eps}|^{(N-2s)q}} \quad\text{if}\  \ y\in \Om\setminus
B_{\frac{|x- x_{\eps}| }{2}}(x_{\eps}). $$

Hence,
\bea &&\gamma_{\eps}^{-\frac{N-2s}{2}}  \int_{\Om\setminus B_{\frac{|x- x_{\eps}|}{2}}(x_{\eps})}|x-y|^{2s-N}u_{\eps}^{2^*-1}(y)dy\no\\
&\leq& \f{C}{|x-x_{\eps}|^{N+2s}}\displaystyle \int_{\Om\setminus B_{\frac{|x- x_{\eps}|}{2}}(x_{\eps})}\frac{1}{|x-y|^{N-2s}} dy\no\\
& \leq & \frac{C}{|x-x_{\eps}|^{N+2s}} \no \eea and 
\Bea && \eps \gamma_{\eps}^{-\frac{N-2s}{2}}  \int_{\Om \setminus \Om \cap B_{\frac{|x- x_{\eps}|}{2}}(x_{\eps})}|x-y|^{2s-N}u_{\eps}^{q}(y) dy \no\\ &\leq&  \frac{C \eps \ga_{\eps}^{\frac{(N-2s)(q-1)}{2}}}{|x-x_{\eps}|^{(N-2s)q}}\displaystyle \int_{\Om\setminus B_{\frac{|x- x_{\eps}|}{2}}(x_{\eps})} \frac{1}{|x-y|^{N-2s}} dy \\&\leq&\frac{C}{|x-x_{\eps}|^{(N-2s)q}}.\Eea

When $y\in\Om\cap B_{\frac{|x- x_{\eps}|}{2}}(x_{\eps})$, we have
$|x-y|\geq |x- x_{\eps}|-|y- x_{\eps}|\geq \frac{1}{2}|x- x_{\eps}|$. Therefore applying \eqref{upps} we obtain 
\Bea  \gamma_{\eps}^{-\frac{N-2s}{2}}  \int_{\Om\cap B_{\frac{|x- x_{\eps}|}{2}}(x_{\eps})} |x-y|^{2s-N} u_{\eps}^{2^*-1}(y)dy
&\leq& \frac{C\gamma_{\eps}^{-\frac{N-2s}{2}} }{|x- x_{\eps}|^{N-2s}}  \int_{\Om\cap B_{\frac{|x- x_{\eps}|}{2}}(x_{\eps})} u_{\eps}^{2^*-1}(y)dy \no \\&\leq& \frac{C\ga_{\eps}^{-N}}{|x- x_{\eps}|^{N-2s}} \int_{\Rn }Z^{2^*-1}\big(\f{y-x_{\eps}}{\ga_{\eps}}\big)dy\no \\&\leq& \frac{C}{|x- x_{\eps}|^{N-2s}}\int_{\Rn}Z^{2^*-1}(x)dx\\&\leq&  \frac{C}{|x- x_{\eps}|^{N-2s}}. \Eea 
Similarly applying Lemma \ref{l:Z}, we obtain
\Bea  \eps \gamma_{\eps}^{-\frac{N-2s}{2}}  \int_{\Om\cap B_{\frac{|x- x_{\eps}|}{2}}(x_{\eps})} |x-y|^{2s-N} u_{\eps}^{q}(y)dy &\leq& \frac{C\eps \gamma_{\eps}^{-\frac{N-2s}{2}} }{|x- x_{\eps}|^{N-2s}}  \int_{\Om\cap B_{\frac{|x- x_{\eps}|}{2}}(x_{\eps})} u_{\eps}^{q}(y)dy \no \\ 
&\leq& \frac{C \eps \gamma_{\eps}^{ N -\frac{N-2s}{2}(q+1)} }{|x- x_{\eps}|^{N-2s}} \int_{\Rn } Z^{q}(y)dy\no \\
&\leq& \frac{C}{|x- x_{\eps}|^{N-2s}}.\Eea
where $C>0$ is a uniform constant.
Hence for any small $r>0$ fixed,  $\Om\setminus B_r(x_0)\subseteq  \Om \setminus \{x_{\eps}\}$, for $\eps>0$ small enough and therefore,
we have $\|w_{\eps}\|_{L^\infty(\Om\setminus B_r(x_0))}\leq C.$
\end{proof}

Note that \eqref{1.37} can be rewritten as  
\begin{equation}
  \label{1.38}
  \left\{
    \begin{aligned}  (-\De)^s w_{\eps}& = \gamma_{\eps}^{2s} w_{\eps}^{2^*-1}-\eps   \gamma_{\eps}^{\frac{N-2s}{2}(q-1)}   w_{\eps}^q \quad\text{ in  } \quad\Om\\
    w_{\eps}&=0 \quad\text{ in  } \quad \Rn\setminus \Om.
\end{aligned}
  \right.
\end{equation}

\begin{lemma}
\label{p7.1} 
\be\label{p7.1'}\lim_{\eps \rightarrow 0}\frac{w_{\eps}(x)}{d(x)^s}=\gamma_{0} \frac{G(x, x_0)}{d(x)^s}~ \text{in}\quad C(\overline{\Om}\setminus B_r( x_0)),\ee
for any $r>0$.  Here, $\ga_0$ is same as in Lemma \ref{A.3}.
\end{lemma}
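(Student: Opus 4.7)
The plan is to combine the Green-function representation of $w_{\eps}$ with the concentration profile of the rescaled function $z_{\eps}$ already obtained in Lemma \ref{l:Z}. Since $w_{\eps}$ solves \eqref{1.38} with zero exterior data, I would start from
$$w_{\eps}(x) = \int_{\Om} G(x,y)\Bigl[\gamma_{\eps}^{2s}\, w_{\eps}^{2^*-1}(y) - \eps\, \gamma_{\eps}^{(N-2s)(q-1)/2}\, w_{\eps}^{q}(y)\Bigr]\,dy,$$
divide by $d^{s}(x)$, and use Lemma \ref{GFC}(ii) to control the kernel by $G(x,y)/d^{s}(x) \leq C|x-y|^{-(N-s)}$. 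For any fixed $r>0$, I would then split the $y$-integration into the concentration region $B_{r/2}(x_{0})$ and its complement in $\Om$, treating the two pieces separately.

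On $\Om\setminus B_{r/2}(x_{0})$, Lemma \ref{p8} gives a uniform $L^{\infty}$-bound on $w_{\eps}$, while the prefactors $\gamma_{\eps}^{2s}$ and $\eps\gamma_{\eps}^{(N-2s)(q-1)/2}$ both tend to zero. Since $|x-y|^{-(N-s)}$ is integrable over $\Om$, this contribution is $o(1)$ uniformly in $x \in \overline{\Om}\setminus B_{r}(x_{0})$. On $B_{r/2}(x_{0})$ I would change variables $y = x_{\eps}+\gamma_{\eps}z$ and exploit $w_{\eps}(x_{\eps}+\gamma_{\eps}z) = \gamma_{\eps}^{-(N-2s)}z_{\eps}(z)$. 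A short exponent calculation shows the first term becomes the scale-invariant expression
$$\int_{B_{r/(2\gamma_{\eps})}(0)} \frac{G(x,\, x_{\eps}+\gamma_{\eps}z)}{d^{s}(x)}\, z_{\eps}^{2^*-1}(z)\,dz,$$
while the second carries the extra factor $\eps\gamma_{\eps}^{(N+2s-q(N-2s))/2}$, which vanishes by Lemma \ref{l:Z}(i).

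Dominated convergence now drives the argument: the bound $z_{\eps}\leq CZ$ from \eqref{unif} provides an $\eps$-independent majorant, and the decay $Z(x)\sim |x|^{-(N-2s)}$ together with $q(N-2s) > N$ (a consequence of $q > 2^*-1$) ensures $Z^{2^*-1},Z^{q}\in L^{1}(\R^{N})$. Combined with the locally uniform convergence $z_{\eps}\to Z$ from Lemma \ref{l:Z}(ii) and the continuity of $G$ off the diagonal, this yields
$$\lim_{\eps\to 0} \frac{w_{\eps}(x)}{d^{s}(x)} = \Bigl(\int_{\R^{N}} Z^{2^*-1}\,dz\Bigr)\,\frac{G(x,x_{0})}{d^{s}(x)},$$
and the explicit form of $Z$ from Lemma \ref{l:Z}(iii) identifies the constant with the $\gamma_{0}$ of Lemma \ref{A.3}.

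The main hurdle will be upgrading this pointwise statement to convergence in $C(\overline{\Om}\setminus B_{r}(x_{0}))$. The key observation is that on this set $|x-y|\geq r/2$ for $y\in B_{r/2}(x_{0})$, so the kernel $G(x, x_{\eps}+\gamma_{\eps}z)/d^{s}(x)$ is jointly continuous in $x$ uniformly for $z$ in bounded sets, and the $L^{1}$-majorant is independent of $\eps$; together these yield equicontinuity of $\{w_{\eps}/d^{s}\}$, which combined with pointwise convergence gives the claimed uniform convergence. The continuity of $w_{\eps}/d^{s}$ all the way up to $\partial\Om$ is itself controlled by the same bound $G(x,y)/d^{s}(x)\leq C|x-y|^{-(N-s)}$, which keeps the ratio finite as $x\to\partial\Om$.
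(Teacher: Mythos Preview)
Your proposal is correct and follows essentially the same strategy as the paper: Green-function representation of $w_{\eps}$, splitting the $y$-integration into a small ball around the concentration point and its complement, killing the outer piece with the kernel bound of Lemma~\ref{GFC}(ii), and identifying the inner piece via the concentration profile $z_{\eps}\to Z$ together with dominated convergence and the majorant $z_{\eps}\le CZ$ from \eqref{unif}. The only noteworthy technical difference is in the passage to $C(\overline{\Om}\setminus B_{r}(x_{0}))$: the paper extends the uniform convergence to $\partial\Om$ by invoking the $C^{\alpha}(\overline{\Om})$ regularity of $w_{\eps}/d^{s}$ from Ros-Oton--Serra \cite{RS1} and the boundary continuity of $G(\cdot,x_{0})/d^{s}$ from \cite{CS}, whereas you argue equicontinuity directly from the integral representation---both routes are valid and of comparable length.
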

\begin{proof} Choose $r>0$ such that $\Om'=\Om\setminus \overline{B_{r}}(x_0)$ is connected. Thus by Lemma \ref{p8},
 $ |w_{\eps}|\leq C$ for all $x\in \Om'$. 

Then  for any $r>0$ small and the fact that $\gamma_{\eps}\to 0$ we have
\bea\lab{sep-13-1} \frac{w_{\eps}(x)}{d(x)^s} &=& \frac{\gamma_{\eps}^{-\frac{N-2s}{2}} }{d(x)^s } \int_{\Om} G(x,y)u_{\eps}^{2^*-1} dy- \eps \frac{\gamma_{\eps}^{-\frac{N-2s}{2}} }{d(x)^s }   \int_{\Om} G(x,y)u_{\eps}^{q} dy \no \\&=&  \frac{\gamma_{\eps}^{-\frac{N-2s}{2}} }{d(x)^s }   \int_{B_{r}(x_0)} G(x,y)u_{\eps}^{2^*-1}(y)dy+
\frac{\gamma_{\eps}^{-\frac{N-2s}{2}} }{d(x)^s }   \int_{\Om\setminus B_{r}(x_0)} G(x,y)u_{\eps}^{2^*-1}(y)dy\no\\
&\quad&-\eps \frac{\gamma_{\eps}^{-\frac{N-2s}{2}} }{d(x)^s }   \int_{B_{r}(x_0)} G(x,y)u_{\eps}^{q} dy -\eps \frac{\gamma_{\eps}^{-\frac{N-2s}{2}} }{d(x)^s }   \int_{\Om\setminus B_{r}(x_0)} G(x,y)u_{\eps}^{q} dy.
\eea
Using the second estimate in Lemma \ref{GFC}, \eqref{upps} and the fact that $Z$ decays at infinity of the order $|y|^{-(N-2s)}$, we estimate the 2nd term on RHS as follows 
\Bea  \frac{\gamma_{\eps}^{-\frac{N-2s}{2}} }{d(x)^s }   \int_{\Om \setminus B_{r}(x_0)} G(x,y)u_{\eps}^{2^*-1}(y)dy &\leq &   \gamma_{\eps}^{-\frac{N-2s}{2}}  \int_{\Om \setminus B_{r}(x_0)} \frac{u_{\eps}^{2^*-1}(y)}{|x-y|^{N-s}}  dy\no\\
&=&\gamma_{\eps}^{-N}  \int_{\Om \setminus B_{r}(x_0)}Z^{2^*-1}\bigg(\f{y-x_{\eps}}{\ga_{\eps}}\bigg)|x-y|^{s-N} dy\no\\
&\leq&C\gamma_{\eps}^{-N}  \int_{\Om \setminus B_{r}(x_0)}|\f{y-x_{\eps}}{\ga_{\eps}}|^{-(N+2s)}\frac{1}{|x-y|^{N-s}}dy\no\\
&=&C\ga_{\eps}^{2s}\int_{\Om \setminus B_{r}(x_0)}\f{1}{|y-x_{\eps}|^{(N+2s)} |x-y|^{N-s} }dy\no\\
&=& o_{r, \eps }(1),\Eea
where $o_{r, \eps }(1)$ denote the term going to $0$ as $r\to 0$ or $\eps \to 0.$ Note that we have used the fact that $|x-y|^{s-N}$ is  integrable in $\Om.$ 
Similarly, it can be shown that,
\Bea  \frac{\gamma_{\eps}^{-\frac{N-2s}{2}} }{d(x)^s }   \int_{\Om \setminus B_{r}(x_0)} G(x,y)u_{\eps}^{q}(y)dy &\leq &   \gamma_{\eps}^{-\frac{N-2s}{2}}  \int_{\Om \setminus B_{r}(x_0)} \frac{u_{\eps}^{q}(y)}{|x-y|^{N-s}}  dy\\
&\leq&C\ga_{\eps}^{(\f{N-2s}{2})(q-1)}\int_{\Om \setminus B_{r}(x_0)} \f{1}{|y-x_{\eps}|^{(N-2s)q}  |x-y|^{N-s} } dy\\
&=&o_{r, \eps }(1).\Eea
Furthermore $\frac{G(x,.)}{\de(x)^s}$ is continuous in
$\overline{\Om}\setminus\{x\}$, ( see \cite[Lemma 6.5]{CS} ). Therefore,  from \eqref{sep-13-1} we obtain
\be\lab{sep-13-2}  \frac{w_{\eps}(x)}{d(x)^s} =\gamma_{\eps}^{-\frac{N-2s}{2}}    \frac{G(x, x_{0})}{d(x)^s}  \int_{B_{r}(x_0)} u_{\eps}^{2^*-1}dy+ L+o_{\eps,r}(1),\ee
where $$L=\eps\gamma_{\eps}^{-\frac{N-2s}{2}}    \frac{G(x, x_{0})}{d(x)^s}  \int_{B_{r}(x_0)} u_{\eps}^{q}dy.$$
Doing a straight forward computation using \eqref{upps}, we have
\bea
L&\leq& \eps\ga_{\eps}^\f{(N+2s)-q(N-2s)}{2}\frac{G(x, x_{0})}{d(x)^s}\int_{\f{B_r(x_0)-x_0}{\ga_{\eps}}}Z^q(y)dy \no\\
&\leq& \eps\ga_{\eps}^\f{(N+2s)-q(N-2s)}{2}\frac{G(x, x_{0})}{d(x)^s}\int_{\Rn}Z^q(y)dy\no\eea

Thus, using Lemma \ref{l:Z}, it is not difficult to check that $L=o_{\eps, r}(1)$.
Define
$$ \gamma_{0}=\lim_{r \to 0}\lim_{\eps\to 0}\gamma_{\eps}^{-\frac{N-2s}{2}}   \int_{B_{r}(0)}  u_{\eps}^{2^*-1}dy. $$
Then it follows from \eqref{sep-13-2} that
\be \lim_{\eps \to 0}\frac{w_{\eps}(x)}{d(x)^s} = \gamma_{0} \frac{G(x, x_{0})  }{d(x)^s}.\no\ee
This argument actually goes through for uniform convergence, i.e., we get
\be\lab{10-9-17-2}
\sup_{x\in\Om\setminus B_r(x_0)} \bigg|\f{w_{\eps}(x)}{d^s(x)} - \ga_0\f{G(x, x_0)}{d^s(x)} \bigg| \to 0.
\ee
Furthermore, note that for each fixed $\eps>0$, $\sup_{\Om}|u_{\eps}(x)|<C_{\eps}$.  Thus, from the definition of $w_{\eps}$, we obtain that for each fixed $\eps>0$, RHS of \eqref{1.38} is in $L^{\infty}(\Om)$. Hence for each fixed $\eps>0$, applying \cite[Theorem 1.2]{RS1} we have $\frac{w_{\eps}}{d^s}\in C^{\alpha}(\overline\Om)$, for some $\al\in (0,1)$. On the other hand, from \cite[Lemma 6.5]{CS}, it follows that $\f{G(., x_0)}{d^s}$ is continuous up to $\pa\Om$. Hence, a straight forward elementary analysis yields 
$$ \sup_{x\in\overline{\Om}\setminus B_r(x_0)} \bigg|\f{w_{\eps}(x)}{d^s(x)} - \ga_0\f{G(x, x_0)}{d^s(x)} \bigg|= \sup_{x\in\Om\setminus B_r(x_0)} \bigg|\f{w_{\eps}(x)}{d^s(x)} - \ga_0\f{G(x, x_0)}{d^s(x)} \bigg| \to 0.
$$

Clearly,  $\gamma_{0}$ is positive as
 \begin{eqnarray*}\gamma_{0}&\geq& \lim_{r \to 0}\lim_{\eps\to 0}\gamma_{\eps}^{-\frac{N-2s}{2}}  \|u_{\eps}\|^{-1}_{\infty} \int_{B_{r}(0)} u_{\eps}^{2^*}dy\\&\geq& \lim_{r \to 0}\lim_{\eps\to 0}\int_{B_{r}(0)}  u_{\eps}^{2^*}dy \\&\geq& A . \end{eqnarray*}
This completes the proof.
\end{proof}

\begin{lemma}\label{A.3}
Let  $u_{\eps}$  be as in Theorem \ref{main2} and $\ga_{\eps}$ be as defined in \eqref{ga-eps}.  Define $\ga_0:=\displaystyle{\lim_{r \to 0}\lim_{\eps\to 0}\gamma_{\eps}^{-\frac{N-2s}{2}} \displaystyle\int_{B_{r}(x_0)}  u_{\eps}^{2^*-1}dy. }$ Then
\begin{equation}\lab{ap:ga-0}
\ga_0=\f{\om_{N}c_{N,s}^{2^*}}{2}\frac{\Ga(\frac{N}{2})\Ga(s)}{\Gamma(\frac{N+2s}{2})},
\end{equation}
where  $c_{N,s}$ is as defined in \eqref{c-ns}.
\end{lemma}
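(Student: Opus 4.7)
The plan is to reduce the integral to an integral of $Z^{2^*-1}$ over all of $\R^N$ by the rescaling \eqref{z-eps}, and then compute this explicit integral of a power of $1+|x|^2/\mu_{N,s}$ using polar coordinates and the Beta function.

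First, I would change variables $y = x_\eps + \ga_\eps x$ in the integral $\ga_\eps^{-(N-2s)/2}\int_{B_r(x_0)}u_\eps^{2^*-1}\,dy$, using that by \eqref{z-eps} one has $u_\eps(y)=\ga_\eps^{-(N-2s)/2}z_\eps((y-x_\eps)/\ga_\eps)$. Since $(N-2s)(2^*-1)/2=(N+2s)/2$, the powers of $\ga_\eps$ conveniently cancel:
\[
\ga_\eps^{-\frac{N-2s}{2}}\int_{B_r(x_0)}u_\eps^{2^*-1}(y)\,dy=\int_{\frac{B_r(x_0)-x_\eps}{\ga_\eps}}z_\eps^{2^*-1}(x)\,dx.
\]
For fixed $r>0$, since $x_\eps\to x_0$ lies in the interior of $\Om$ (Lemma \ref{intblow-up}) and $\ga_\eps\to 0$ (Lemma \ref{l:5-1}), the domain $(B_r(x_0)-x_\eps)/\ga_\eps$ exhausts $\R^N$. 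Combined with the local convergence $z_\eps\to Z$ of Lemma \ref{l:Z}(ii) and the uniform pointwise bound $z_\eps\le CZ$ from \eqref{unif}, where $Z^{2^*-1}(x)\sim |x|^{-(N+2s)}$ is integrable at infinity, the dominated convergence theorem gives
\[
\lim_{\eps\to 0}\ga_\eps^{-\frac{N-2s}{2}}\int_{B_r(x_0)}u_\eps^{2^*-1}\,dy=\int_{\R^N}Z^{2^*-1}(x)\,dx,
\]
a value that does not depend on $r$, so the subsequent limit $r\to 0$ is trivial.

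Next I would compute $\int_{\R^N}Z^{2^*-1}\,dx$ explicitly. Using $Z(x)=(1+|x|^2/\mu_{N,s})^{-(N-2s)/2}$ with $\mu_{N,s}=c_{N,s}^{4/(N-2s)}$, and the identity $(N-2s)(2^*-1)/2=(N+2s)/2$, the substitution $x=\sqrt{\mu_{N,s}}\,y$ followed by polar coordinates $t=|y|^2$ yields
\[
\int_{\R^N}Z^{2^*-1}\,dx=\mu_{N,s}^{N/2}\om_N\cdot\tfrac12\int_0^\infty t^{\frac{N}{2}-1}(1+t)^{-\frac{N+2s}{2}}\,dt=\tfrac{\om_N}{2}\,\mu_{N,s}^{N/2}\,B\!\left(\tfrac{N}{2},s\right),
\]
where in the last step I recognize the Beta integral \eqref{eq:beta} with parameters $a=N/2$ and $a+b=(N+2s)/2$. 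Since $\mu_{N,s}^{N/2}=c_{N,s}^{2N/(N-2s)}=c_{N,s}^{2^*}$ and $B(N/2,s)=\Ga(N/2)\Ga(s)/\Ga((N+2s)/2)$, this gives exactly \eqref{ap:ga-0}.

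The main technical point is justifying the dominated convergence, i.e., controlling $z_\eps$ uniformly by an integrable function; this is precisely what the bound \eqref{unif} (established via the Kelvin-transform argument and Lemma \ref{lb}) provides, since $Z^{2^*-1}$ decays like $|x|^{-(N+2s)}$ at infinity and is bounded near the origin. Everything else is a routine change of variables and a standard Beta-function identity.
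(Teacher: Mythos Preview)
Your proof is correct and follows essentially the same approach as the paper: rescale via \eqref{z-eps} to rewrite the integral as $\int_{(B_r(x_0)-x_\eps)/\ga_\eps} z_\eps^{2^*-1}\,dx$, pass to the limit to obtain $\int_{\R^N}Z^{2^*-1}\,dx$, and then evaluate this via the Beta function identity. If anything, your write-up is slightly more careful than the paper's, since you explicitly invoke the uniform bound \eqref{unif} to justify dominated convergence, a step the paper leaves implicit.
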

\begin{proof}

We define $I_{\eps, r}:=\gamma_{\eps}^{-\frac{N-2s}{2}} \displaystyle\int_{B_{r}(x_0)}u_{\eps}^{2^*-1}dy$.
Using \eqref{z-eps}, we obtain $u_{\eps}(x)=\ga_{\eps}^{-\frac{N-2s}{2}}z_{\eps}\big(\frac{x-x_{\eps}}{\ga_{\eps}}\big)$. Thus
\begin{equation}
I_{\eps,r}=\ga_{\eps}^{-\frac{N-2s}{2}-\f{N+2s}{2}+N}\int_{\frac{B_r(x_0)-x_{\eps}}{\ga_{\eps}}}z_{\eps}^{2^*-1}(x)dx=\int_{\frac{B_r(x_0)-x_{\eps}}{\ga_{\eps}}}z_{\eps}^{2^*-1}(x)dx.
\end{equation}
Note that, $\eps\to 0$ implies $\ga_{\eps}\to 0$. Therefore,
\be\lab{ap:ga-0'}
\ga_0=\lim_{r\to 0}\lim_{\eps\to 0}I_{\eps, r}=\int_{\Rn}Z^{2^*-1}dx,
\ee
 where $Z$ is as in Lemma \ref{l:Z}. Hence, by doing a straight forward computation, we obtain
$$\ga_0=\frac{\om_N c_{N,s}^{2^*}}{2}B\bigg(\frac{N}{2}, s\bigg),$$
where $B(a, b)=\displaystyle\int_{0}^{\infty}t^{a-1}(1+t)^{-a-b}dt$ is the Beta function, $c_{N,s}$ is as defined in \eqref{c-ns} and $\om_N$ is the surface measure of unit ball. Recall that $B(a, b)=\f{\Ga(a)\Ga(b)}{\Ga(a+b)}$.
Thus $B\big(\frac{N}{2}, s\big)=\frac{\Ga(\frac{N}{2})\Ga(s)}{\Gamma(\frac{N+2s}{2})}$ and the lemma follows.
\end{proof}

\vspace{3mm}

\begin{proof}[\bf {Proof of Theorem \ref{main2}}] Applying \eqref{poho} to $u_{\eps}$ yields
$$\Ga(1+s)^2\displaystyle\int_{\pa\Om}\left(\f{u_{\eps}(x)}{d^s(x)}\right)^2 \langle x-x_0,\,\nu\rangle dS= 2 \eps  \bigg( \frac{N-2s}{2}- \frac{N}{q+1}\bigg) \int_{\Om}   u_{\eps}^{q+1} dx .
$$
Using $w_{\eps}=||u_{\eps}||_{\infty}u_{\eps}$ in the above expression, we have
\be\lab{sep-7-1}  \Ga(1+s)^2\displaystyle\int_{\pa\Om}\left(\f{w_{\eps}(x)}{d^s(x)}\right)^2\langle x-x_0,\,\nu\rangle dS= 2\eps\bigg( \frac{N-2s}{2}- \frac{N}{q+1}\bigg)  \|u_{\eps}\|_{\infty}^{2}  \int_{\Om}  u_{\eps}^{q+1} dx.\ee
Thanks to Lemma \ref{p7.1}, applying dominated convergence theorem, we have
\be\lab{oct-5-1}
\lim_{\eps\to 0}\Ga(1+s)^2\displaystyle\int_{\pa\Om}\left(\f{w_{\eps}(x)}{d^s(x)}\right)^2\langle x-x_0,\,\nu\rangle dS= \ga_0^2 \Ga(1+s)^2\displaystyle\int_{\pa\Om}\left(\f{G(x, x_0)}{d^s(x)}\right)^2\langle x-x_0,\,\nu\rangle dS.
\ee 
Moreover, using the relations \eqref{z-eps} and \eqref{ga-eps}, the RHS of \eqref{sep-7-1}  reduces to
\bea\lab{sep-7-2}
\text{RHS of \eqref{sep-7-1}} &=&
 2\eps\bigg( \frac{N-2s}{2}- \frac{N}{q+1}\bigg)  \|u_{\eps}\|_{\infty}^{2}\ga_{\eps}^{\f{(N+2s)-q(N-2s)}{2}}\int_{\Om_{\eps}} z_{\eps}^{q+1} dx \no\\
&=&2\eps\bigg( \frac{N-2s}{2}- \frac{N}{q+1}\bigg) \|u_{\eps}\|_{\infty}^{\f{q(N-2s)+N-6s}{N-2s}} \int_{\Om_{\eps}} z_{\eps}^{q+1} dx .\eea
Since $z_{\eps}\to Z$ a,e and $z_{\eps}\leq CZ$, by the dominated convergence theorem it follows $\displaystyle\int_{\Om_{\eps}}  z_{\eps}^{q+1} dx\to \int_{\Rn}  Z^{q+1} dx$.
We substitute back \eqref{sep-7-2} into \eqref{sep-7-1} and  take
the limit $\eps\to 0$. 
Therefore,  using \eqref{oct-5-1} we obtain 
\be\label{6-i} \lim_{\eps \rightarrow 0 }\eps\|u_{\eps}\|_{\infty}^{\frac{q(N-2s)+N-6s}{N-2s}} = \frac{\ga_0^2\Ga(1+s)^2\displaystyle\int_{\pa\Om}\left(\f{G(x, x_0)}{d^s(x)}\right)^2\langle x-x_0,\,\nu\rangle dS}{2\bigg( \frac{N-2s}{2}- \frac{N}{q+1}\bigg)  \displaystyle\int_{\R^N}Z ^{q+1}dx }.\ee
From Lemma \ref{l:Z}, we know 
 $Z(x)=\big(1+\f{|x|^2}{\mu_{N,s}}\big)^{-\big(\f{N-2s}{2}\big)}$, where $\mu_{N,s}=c_{N,s}^\f{4}{N-2s}$.  Thus, a straight forward calculation yields
$$\displaystyle\int_{\R^N}Z ^{q+1}dx=\f{c_{N,s}^{2^*}\om_N}{2}B\left(\f{N}{2}, \ \big(\f{N-2s}{2}\big)q-s\right).$$ From Lemma \ref{A.3}, it is known that $\ga_0=\f{\om_{N}c_{N,s}^{2^*}}{2}\frac{\Ga(\frac{N}{2})\Ga(s)}{\Gamma(\frac{N+2s}{2})}$. Substituting the value of $\ga_0$ and $\displaystyle\int_{\R^N}Z ^{q+1}dx$ in \eqref{6-i}  we have,
\bea
\lim_{\eps \rightarrow 0 }\eps\|u_{\eps}\|_{\infty}^{\frac{q(N-2s)+N-6s}{N-2s}}=\f{\om_Nc_{N,s}^{2^*}}{2}\f{(q+1)R_{N, s, x_0}}{q(N-2s)-(N+2s)} s^2 \Gamma(s)^2B\bigg(\f{N}{2},\ s\bigg)^2\times\no\\
B\bigg(\f{N}{2},\ \big(\f{N-2s}{2}\big)q-s\bigg)^{-1} \no \eea
\end{proof}

\section{\bf Uniqueness result for $p=2^*-1$}

\noi {\bf Proof of Theorem \ref{t:unique}:} We break the proof into few steps.\\

\noi {\bf Step 1}: Let $u_{\eps}$ and $v_{\eps}$ be two solutions of \eqref{eq:a3'} with $$\max_{\Om}u_{\eps}=\max_{\Om}v_{\eps}.$$ Let 
$\ga_{\eps}$ be as in \eqref{ga-eps}. Then by the assumptions of the theorem, we have 
$$\ga_{\eps}=||u_{\eps}||_{L^{\infty}}^{-\f{2}{N-2s}}=u_{\eps}(0)^{-\f{2}{N-2s}}=||v_{\eps}||_{L^{\infty}(\Om)}^{-\f{2}{N-2s}}=v_{\eps}(0)^{-\f{2}{N-2s}}.$$
Note that, by Lemma \ref{l:5-1}, we have $\ga_{\eps}\to 0$ as $\eps\to 0$.
Define, $$\theta_{\eps}(x)=u_{\eps}(\ga_{\eps}x)-v_{\eps}(\ga_{\eps}x),\quad x\in\Om_{\eps}=\f{\Om}{\ga_{\eps}},$$
and $$\psi_{\eps}(x)=\f{\theta_{\eps}(x)}{||\theta_{\eps}||_{L^{\infty}(\Om_{\eps})}}=\f{\theta_{\eps}(x)}{||u_{\eps}-v_{\eps}||_{L^{\infty}(\Om)}}.$$

Therefore, 
$$(-\De)^s\psi_{\eps}=\f{\ga_{\eps}^{2s}}{||u_{\eps}-v_{\eps}||_{L^{\infty}(\Om)}}[\big(u_{\eps}^p(\ga_{\eps}x)-v_{\eps}^p(\ga_{\eps}x)\big)-\eps\big(u_{\eps}^q(\ga_{\eps}x)-v_{\eps}^q(\ga_{\eps}x)\big)].$$ 
It is easy to see that,
$$u_{\eps}^p(\ga_{\eps}x)-v_{\eps}^p(\ga_{\eps}x)=p\int_0^1 \big(tu_{\eps}(\ga_{\eps}x)+(1-t)v_{\eps}(\ga_{\eps}x)\big)^{p-1}\theta_{\eps}(x) dt.$$
Using the fact that $p=2^*-1=\f{N+2s}{N-2s}$ and $\ga_{\eps}^{2s}=||u_{\eps}||_{L^{\infty}(\Om)}^{-(p-1)}=||v_{\eps}||_{L^{\infty}(\Om)}^{-(p-1)}$, a straight forward computation yields
\begin{equation}\lab{nov-2-1}
\left\{\begin{aligned}
      (-\De)^s \psi_{\eps} &=\big(c^1_{\eps}(x)-\eps c^2_{\eps}(x)\big)\psi_{\eps} \quad\text{in }\quad \Om_{\eps}, \\
      \psi_{\eps} &=0 \quad\text{in}\quad \Rn\setminus\Om_{\eps},
          \end{aligned}
  \right.
\end{equation}
where
\be\lab{nov-2-2}
c^1_{\eps}(x)=p\int_0^1\bigg[t\f{u_{\eps}(\ga_{\eps}x)}{||u_{\eps}||_{L^{\infty}(\Om)}}+(1-t)\f{v_{\eps}(\ga_{\eps}x)}{||v_{\eps}||_{L^{\infty}(\Om)}}\bigg]^{p-1}dt,\ee
\be\lab{nov-2-3}
c^2_{\eps}(x)=q\ga_{\eps}^{\f{(N+2s)-q(N-2s)}{2}}\int_0^1\bigg[t\f{u_{\eps}(\ga_{\eps}x)}{||u_{\eps}||_{L^{\infty}(\Om)}}+(1-t)\f{v_{\eps}(\ga_{\eps}x)}{||v_{\eps}||_{L^{\infty}(\Om)}}\bigg]^{q-1}dt.
\ee
Here we observe that, $\f{u_{\eps}(\ga_{\eps}x)}{||u_{\eps}||_{L^{\infty}(\Om)}}=z_{\eps}(x)$, where $z_{\eps}$ is as defined in \eqref{z-eps} (since here $x_{\eps}=0$). Consequently, using Lemma \ref{l:Z} and \eqref{unif}, we obtain
\be\lab{nov-2-12}\f{u_{\eps}(\ga_{\eps}x)}{||u_{\eps}||_{L^{\infty}(\Om)}}\to Z \quad\text{in}\ C^{s}_{loc}(\Rn) \quad\text{and}\quad \f{u_{\eps}(\ga_{\eps}x)}{||u_{\eps}||_{L^{\infty}(\Om)}}\leq \f{C}{\big(1+\f{|x|^2}{\mu_{N,s}}\big)^\f{N-2s}{2}},\ee
where $Z$ is the solution of \eqref{ent} with $Z(0)=1$ and $0< Z\leq 1$.
Hence $Z(x)=(1+\f{|x|^2}{\mu_{N,s}})^{-\f{N-2s}{2}}$, where $\mu_{N,s}=c_{N,s}^\f{4}{N-2s}$, (see Lemma \ref{l:Z}). As a consequence, thanks to Lemma \ref{l:Z}(i), from \eqref{nov-2-2} and \eqref{nov-2-3} we have
\be\lab{nov-2-4}
c^1_{\eps}(x)\to \bigg(\frac{N+2s}{N-2s}\bigg) \f{1}{\big(1+\f{|x|^2}{\mu_{N,s}}\big)^{2s}} \quad\text{and}\quad \eps c^2_{\eps}(x)\to 0,
\ee
uniformly on compact subsets of $\Rn$. Applying Schauder estimates \cite{RS1} to the equation \eqref{nov-2-1}, it follows there exists $\psi\in C^s(\Rn)$ such that $\psi_{\eps}\to \psi$ in $C^s_{loc}(\Rn)$. Since, from Remark \ref{r:nov-1} we have $\psi_{\eps}$ is radially symmetric, 
we obtain $\psi$ is radially symmetric too. Passing to the limit in \eqref{nov-2-1} (as in Lemma \ref{l:Z}) yields
\begin{equation}\lab{nov-2-5}
\left\{\begin{aligned}
      &(-\De)^s \psi =\bigg(\frac{N+2s}{N-2s}\bigg)\f{\psi}{\big(1+\f{|x|^2}{\mu_{N,s}}\big)^{2s}} \quad\text{in }\quad \Rn, \\
      &||\psi||_{L^{\infty}(\Rn)}\leq 1.
          \end{aligned}
  \right.
\end{equation}

\vspace{2mm}

\noi {\bf Step 2}: In this step, we will prove that $\psi\in D^{s,2}(\Rn)$.\\
Since $\psi_{\eps}\in H^s(\Om_{\eps})$, $\psi_{\eps}=0$ in $\Rn\setminus\Om_{\eps}$ and $u_{\eps}, v_{\eps}=0$ in $\Rn\setminus\Om$, taking $\psi_{\eps}$ as a test function in \eqref{nov-2-1}, we have
\be\lab{nov-2-6}
||\psi_{\eps}||^2_{D^{s,2}(\Rn)}= \int_{\Rn}c^1_{\eps}(x)\psi_{\eps}^2dx-\eps\int_{\Rn}c^2_{\eps}(x)\psi_{\eps}^2dx\leq \int_{\Om_{\eps}}c^1_{\eps}(x)\psi_{\eps}^2dx.
\ee
Thus applying the Sobolev inequality, we have
\be\lab{nov-2-7}
\mathcal{S}\bigg(\int_{\Om_{\eps}}|\psi_{\eps}|^{2^*}dx\bigg)^\f{2}{2^*}\leq \int_{\Om_{\eps}}c^1_{\eps}(x)\psi_{\eps}^2dx.
\ee
Let us fix $\de>0$, will be chosen later. Since $||\psi_{\eps}||_{L^{\infty}(\Om_{\eps})}=1$,  H\"{o}lder inequality yields
\be\lab{nov-2-8}
\int_{\Om_{\eps}}c^1_{\eps}(x)\psi_{\eps}^2dx\leq \int_{\Om_{\eps}}c^1_{\eps}(x)\psi_{\eps}^{2-\de}dx\leq\bigg(\int_{\Om_{\eps}}|\psi_{\eps}|^{2^*}dx\bigg)^\f{2-\de}{2^*}\bigg(\int_{\Om_{\eps}}|c^1_{\eps}|^{\f{2^*}{2^*-2+\de}}dx\bigg)^\f{2^*-2+\de}{2^*}. 
\ee
Combining \eqref{nov-2-7} and \eqref{nov-2-8} we have
\bea\lab{nov-2-9}\int_{\Om_{\eps}}|\psi_{\eps}|^{2^*}dx &\leq&\bigg(\int_{\Om_{\eps}}|c^1_{\eps}|^{\f{2^*}{2^*-2+\de}}dx\bigg)^\f{2^*-2+\de}{\de}\no\\
&\leq& C\bigg(\int_{\Rn}\bigg[\f{1}{\big(1+\f{|x|^2}{\mu_{N,s}}\big)^{2s}}\bigg]^{\f{2^*}{2^*-2+\de}}dx\bigg)^\f{2^*-2+\de}{\de}\no\\
&\leq& C, \eea
for some constant $C>0$, if we choose $\de<\f{4s}{N-2s}$. For this choice of $\de$, substituting back \eqref{nov-2-9} into \eqref{nov-2-8} yields $\displaystyle\int_{\Om_{\eps}}c^1_{\eps}(x)\psi_{\eps}^2dx\leq C$. As a result, from \eqref{nov-2-6} we have $||\psi_{\eps}||_{D^{s,2}(\Rn)}$ is uniformly bounded. Since $\psi_{\eps}\to \psi$ in $C^s_{loc}(\Rn)$,
$$||\psi||_{D^{s,2}(\Rn)}\leq \lim \inf_{\eps\to 0}||\psi_{\eps}||_{D^{s,2}(\Rn)}\leq C,$$ which implies $\psi\in D^{s,2}(\Rn)$.

\vspace{2mm}

\noi{\bf Step 3}: In this step we will establish that 
\be\lab{nov-2-10}
|\psi_{\eps}(x)|\leq \f{C}{|x|^{N-2s}}, \quad x\in\Om_{\eps}\setminus B_r(0),
\ee
for $\eps>0$ small enough and for some constant $C>0$ and $r>0$ independent of $\eps$. \\

To prove this step, define $\hat{\psi}_{\eps}$ as the Kelvin transform of $\psi_{\eps}$, that is,
$$\hat{\psi}_{\eps}(x)=\f{1}{|x|^{N-2s}}\psi_{\eps}(\f{x}{|x|^2}), \quad x\in\Om_{\eps}\setminus\{0\}.$$ Let $\Om^{\star}_{\eps}$ be the image $\Om_{\eps}$ under the Kelvin transform.  Since \\
$(-\De)^s \hat{\psi}_{\eps}(x)= \f{1}{|x|^{N+2s}}(-\De)^s\psi_{\eps}(\f{x}{|x|^2})$, doing a straight forward computation we obtain,
\begin{equation}\lab{nov-2-11}
\left\{\begin{aligned}
      &(-\De)^s \hat\psi_{\eps} =\f{1}{|x|^{4s}}\bigg(c^1_{\eps}\big(\f{x}{|x|^2}\big)-\eps c^2_{\eps}\big(\f{x}{|x|^2}\big)\bigg)\hat\psi_{\eps} \quad\text{in }\quad \Om_{\eps}^*, \\
      &\hat\psi_{\eps}=0 \quad\text{in }\quad \Rn\setminus\Om_{\eps}^*.
          \end{aligned}
  \right.
\end{equation}
We set, $$a_{\eps}(x):= \f{1}{|x|^{4s}}\bigg(c^1_{\eps}\big(\f{x}{|x|^2}\big)-\eps c^2_{\eps}\big(\f{x}{|x|^2}\big)\bigg).$$
Thus, \eqref{nov-2-11} reduces to $$(-\De)^s \hat\psi_{\eps} =a_{\eps}(x)\hat\psi_{\eps} \quad\text{in }\quad \Om_{\eps}^*.$$

\noi{\bf Claim:} For $N>4s$, the function $a_{\eps}\in L^{t}(\Om_{\eps}^*)$, for some $t>\f{N}{2s}$. \\

Assuming the claim, let us first complete the proof of step 3. Thanks to the above claim, using Moser iteration technique in the spirit of the proof of \cite[Theorem 1.1]{BM} (see also \cite{TX} and \cite[Lemma B.3]{St}), it can be shown that  
$$\sup_{\Om_{\eps}^*\cap B_1(0)}|\hat\psi_{\eps}|\leq C\bigg(\int_{\Om_{\eps}^*\cap B_2(0)}|\hat\psi_{\eps}|^{2^*}\bigg)^\f{1}{2^*}.$$
Moreover,
$$\int_{\Om_{\eps}^*\cap B_2(0)}|\hat\psi_{\eps}|^{2^*}\leq \int_{\Om_{\eps}^*}|\hat\psi_{\eps}|^{2^*}=\int_{\Om_{\eps}}|\psi_{\eps}|^{2^*}\leq C.$$
The last inequality is due to \eqref{nov-2-9}. Hence $\sup_{\Om_{\eps}^*\cap B_1(0)}|\hat\psi_{\eps}|\leq C$. This in turn implies, 
$$|\psi_{\eps}(x)|\leq \f{C}{|x|^{N-2s}}, \quad x\in\Om_{\eps}\setminus B_r(0),$$
for $\eps>0$ small enough and for some constant $C>0$ and $r>0$.

Now, let us prove the claim.

Using \eqref{nov-2-12}, it is easy to see that $\f{1}{|x|^{4s}}c^1_{\eps}(\f{x}{|x|^2})\leq \f{C}{\big(\mu_{N,s}^{-1}+|x|^{2}\big)^{2s}}$.
Hence for $t>\f{N}{2s}$,
\be\lab{nov-2-13}
\int_{\Om_{\eps}^*}\f{1}{|x|^{4st}}c^1_{\eps}(\f{x}{|x|^2})^t dx\leq C\int_{\Rn}\f{dx}{(\mu_{N,s}^{-1}+|x|^{2})^{2st}}<\infty.
\ee
 On the other hand, $||\f{u_{\eps}(\ga_{\eps}x)}{||u_{\eps}||_{L^{\infty}(\Om)}}||_{L^{\infty}(\Om)}\leq 1$ implies $|\eps c^2_{\eps}|\leq q\eps\ga_{\eps}^{\f{(N+2s)-q(N-2s)}{2}} $. Note that, boundedness of $\Om$ implies there exists $R>0$ such that $\Om\subseteq B_R(0)$. Hence $\Om_{\eps}\subseteq B_\f{R}{\ga_{\eps}}(0)$ and 
$\Om_{\eps}^*\subseteq \Rn\setminus B_\f{\ga_{\eps}}{R}(0)$. 
Therefore,
\bea\lab{nov-2-14}
\int_{\Om_{\eps}^*}\f{1}{|x|^{4st}}c^2_{\eps}(\f{x}{|x|^2})^t dx &\leq& C\bigg[\eps\ga_{\eps}^{\f{(N+2s)-q(N-2s)}{2}}\bigg]^t \int_{\Om_{\eps}^*}\f{dx}{|x|^{4st}}\no\\
&\leq& C\bigg[\eps\ga_{\eps}^{\f{(N+2s)-q(N-2s)}{2}}\bigg]^t \bigg(\f{\ga_{\eps}}{R}\bigg)^{N-4st}
\eea
Since $p=2^*-1$, from Theorem \ref{main2}, it follows that $\eps||u_{\eps}||^\f{q(N-2s)+N-6s}{N-2s}=C'$, that is, $\eps\ga_{\eps}^{-\f{N-6s+q(N-2s)}{2}}=C'$. As a result,
\bea\lab{nov-2-15}
\text{RHS of \eqref{nov-2-14}}\leq C\ga_{\eps}^{t(N-6s)+N}.
\eea
Clearly, $N\geq 6s$ implies $\ga_{\eps}^{t(N-6s)+N}<C$ for some constant $C>0$. If $4s<N<6s$, then choose $t\in(\f{N}{2s}, \f{N}{6s-N})$ to get $t(N-6s)+N\geq 0$. 

Hence, combining \eqref{nov-2-13} and \eqref{nov-2-15} the claim follows.

\vspace{2mm}

{\bf Step 4}: Thanks to \cite[Theorem 1.1]{DPS},  the linear space of solutions to equation \eqref{nov-2-5} can be spanned by the following $(N+1)$ functions:
$$\psi_i(x)=\f{2x_i}{\big(1+\f{|x|^2}{\mu_{N,s}}\big)^\f{N-2s+2}{2}}, \quad i=1,\cdots, N$$and
$$\psi_{N+1}(x)=\f{1-|x|^2}{\big(1+\f{|x|^2}{\mu_{N,s}}\big)^\f{N-2s+2}{2}}.$$
That is, general solution of \eqref{nov-2-5} can be written as
$$\psi(x)=\al \f{1-|x|^2}{\big(1+\f{|x|^2}{\mu_{N,s}}\big)^\f{N-2s+2}{2}}+\sum_{i=1}^N \ba_i \f{2x_i}{\big(1+\f{|x|^2}{\mu_{N,s}}\big)^\f{N-2s+2}{2}},$$ where $\al, \ba_i\in\R$.
Since $\psi$ is a symmetric function, each $\ba_i=0$. 

\vspace{2mm}

{\bf Step 5}: In this step we will prove that  $\al=0$.

 Suppose $\al\not\not=0$. We aim to get a contradiction. For simplicity of the calculation, we can take  $\al=1$ and $\mu_{N,s}=1$, that is, 
\be\lab{nov-3-4}\psi(x)=\f{1-|x|^2}{(1+|x|^2)^\f{N-2s+2}{2}}.\ee

Let $\Om'$ be any neighbourhood of $\pa\Om$, not containing the origin.  

\vspace{2mm}

\begin{center}
{\bf Claim}: $||u_{\eps}||_{L^{\infty}(\Om)}^2\f{(u_{\eps}(x)-v_{\eps}(x))}{||u_{\eps}-v_{\eps}||_{L^{\infty}(\Om)}\de(x)^s}\to -c_0\f{G(x,0)}{\de(x)^s} \quad\text{uniformly in}\  \Om',$
\end{center}
for some constant $c_0>0$.

\vspace{2mm}

 Indeed,
\bea\lab{nov-3-1}
(-\De)^s \bigg(||u_{\eps}||_{L^{\infty}(\Om)}^2\f{(u_{\eps}(x)-v_{\eps}(x))}{||u_{\eps}-v_{\eps}||_{L^{\infty}(\Om)}}\bigg) 
&=&\f{||u_{\eps}||^2_{L^{\infty}(\Om)}}{||u_{\eps}-v_{\eps}||_{L^{\infty}(\Om)}}\bigg[(u_{\eps}^p-v_{\eps}^p)-\eps(u_{\eps}^q-v_{\eps}^q) \bigg]\no\\
&=&\f{||u_{\eps}||^2_{L^{\infty}(\Om)}}{||u_{\eps}-v_{\eps}||_{L^{\infty}(\Om)}}(d^1_{\eps}(x)-\eps d^2_{\eps}(x))(u_{\eps}-v_{\eps})\no\\
&=&: f_{\eps},
\eea 
where $$d^1_{\eps}(x)=p\int_0^1\big(tu_{\eps}(x)+(1-t)v_{\eps}(x)\big)^{p-1}dt$$
and $$d^2_{\eps}(x)=q\int_0^1\big(tu_{\eps}(x)+(1-t)v_{\eps}(x)\big)^{q-1}dt.$$
Note that $$d^1_{\eps}(\ga_{\eps}x)=\ga_{\eps}^{-2s}c^1_{\eps}(x) \quad\text{and}\quad d^2_{\eps}(\ga_{\eps}x)=\ga_{\eps}^{-2s}c^2_{\eps}(x).$$
Therefore, using \eqref{nov-2-12}, we have
\be\lab{nov-3-2}
d^1_{\eps}(x)\leq C \ga_{\eps}^{-2s}\f{1}{(\mu_{N,s}+|\f{x}{\ga_{\eps}}|^2)^{2s}}\leq C\f{\ga_{\eps}^{2s}}{|x|^{4s}}.
\ee
\be\lab{nov-3-3}
d^2_{\eps}(x)\leq C\f{\ga_{\eps}^{-2s}}{(\mu_{N,s}+|\f{x}{\ga_{\eps}}|^2)^{\f{(N-2s)(q-1)}{2}}}\leq C\f{\ga_{\eps}^{q(N-2s)-N}}{|x|^{(N-2s)(q-1)}}.
\ee
{\bf Subclaim 1:} $\lim_{\eps\to 0}f_{\eps}(x)=0 \quad \forall\ x\in\Om'$.\\
As $\ga_{\eps}\to 0$, using \eqref{nov-2-10}, \eqref{nov-3-2} and \eqref{nov-3-3},  for $x\in\Om'$ we obtain

\bea
f_{\eps}(x)&=&||u_{\eps}||_{L^{\infty}(\Om)}^2\psi_{\eps}\bigg(\f{x}{\ga_{\eps}}\bigg)\big(d^1_{\eps}(x)-\eps d^2_{\eps}(x)\big)\no\\
&\leq& C||u_{\eps}||_{L^{\infty}(\Om)}^2\f{1}{|\f{x}{\ga_{\eps}}|^{N-2s}}\big(d^1_{\eps}(x)+\eps d^2_{\eps}(x)\big)\no\\
&\leq&\f{C}{|x|^{N-2s}}\bigg(\f{\ga_{\eps}^{2s}}{|x|^{4s}}+\f{\ga_{\eps}^{q(N-2s)-N}}{|x|^{(N-2s)(q-1)}}\bigg)\no\\
&\to& 0,\no
\eea
since $q>\f{N+2s}{N-2s}$.

\noi{\bf Subclaim 2}: $\displaystyle{\lim_{\eps\to 0}}\int_{\Om}f_{\eps}(x) dx=-c_0$, for some constant $c_0>0$. \\

To see this, 
\bea \displaystyle\int_{\Om}f_{\eps}(x) dx&=& \f{||u_{\eps}||^2_{L^{\infty}(\Om)}}{||u_{\eps}-v_{\eps}||_{L^{\infty}(\Om)}}\int_{\Om}d^1_{\eps}(x)(u_{\eps}-v_{\eps})dx\no\\
&-&\f{||u_{\eps}||^2_{L^{\infty}(\Om)}}{||u_{\eps}-v_{\eps}||_{L^{\infty}(\Om)}}\int_{\Om}\eps d^2_{\eps}(x)(u_{\eps}-v_{\eps})dx\no\\
&=&\int_{\Om_{\eps}}c^1_{\eps}(y)\psi_{\eps}(y) dy- \eps\int_{\Om_{\eps}}c^2_{\eps}(y)\psi_{\eps}(y) dy.\no
\eea
In the last step, we have used the change of variable $x=\ga_{\eps}y$.
Using \eqref{nov-2-4} and \eqref{nov-3-4} via dominated convergence theorem, we obtain
\be\lab{nov-3-5}\lim_{\eps\to 0}\int_{\Om_{\eps}}c^1_{\eps}(y)\psi_{\eps}(y) dy=p\int_{\Rn}\f{1-|x|^2}{(1+|x|^2)^{2s+\f{N-2s+2}{2}}}dx.\ee 
Using change of variable the RHS of the above equality can be computed as follows:
\bea\lab{nov-3-9}
\int_{\Rn}\f{1-|x|^2}{(1+|x|^2)^\f{N+2s+2}{2}}dx&=&\om_N\int_0^1\f{(1-r^2)r^{N-1}}{(1+r^2)^\f{N+2s+2}{2}}dr\no\\
&-&\om_N\int_1^0\f{1-\f{1}{t^2}}{(1+\f{1}{t^2})^\f{N+2s+2}{2}}t^{-2-(N-1)}dt \no\\
&=&-\om_N\int_0^1\f{r^{2s-1}(1-r^2)(1-r^{N-2s})}{(1+r^2)^\f{N+2s+2}{2}}dr
\eea
As $s>0$, $\displaystyle\int_0^1\f{r^{2s-1}(1-r^2)(1-r^{N-2s})}{(1+r^2)^\f{N+2s+2}{2}}dr\leq\int_0^1 r^{2s-1}dr<\infty$. Hence from \eqref{nov-3-5}, we get
\be\lab{nov-3-6}
\lim_{\eps\to 0}\int_{\Om_{\eps}}c^1_{\eps}(y)\psi_{\eps}(y) dy=-c_0,
\ee
for some $c_0>0$. Similarly it can be shown that
$$|\lim_{\eps\to 0}\int_{\Om_{\eps}}c^2_{\eps}(y)\psi_{\eps}(y) dy|<\infty.$$ Therefore, 
\be\lab{nov-3-7}
\lim_{\eps\to 0}\eps\int_{\Om_{\eps}}c^2_{\eps}(y)\psi_{\eps}(y) dy=0.
\ee
Combining \eqref{nov-3-6} and \eqref{nov-3-7}, Subclaim 2 follows.

Now we get back to \eqref{nov-3-1}. Define, $$\phi_{\eps}(x):=
||u_{\eps}||_{L^{\infty}(\Om)}^2\f{(u_{\eps}(x)-v_{\eps}(x))}{||u_{\eps}-v_{\eps}||_{L^{\infty}(\Om)}}.$$ Then $\phi_{\eps}$ satisfies
\begin{equation*}
\left\{\begin{aligned}
      (-\De)^s \phi_{\eps}& =f_{\eps} \quad\text{in }\quad \Om, \\
      \phi_{\eps}&=0 \quad\text{in }\quad \Rn\setminus\Om.\\
          \end{aligned}
  \right.
\end{equation*}
Then for any $r>0$ small and $x\in\Om'$, we have
\bea\lab{nov-3-14}
\f{\phi_{\eps}(x)}{d^s(x)}&=&\int_{\Om}\f{G(x,y)f_{\eps}(y)}{d^s(x)}dy\no\\
&=&\int_{B_r(0)}\f{G(x,y)f_{\eps}(y)}{d^s(x)}dy+\int_{\Om\setminus B_r(0)}\f{G(x,y)f_{\eps}(y)}{d^s(x)}dy.
\eea
Using Lemma \ref{GFC} and  Subclaim 1, we estimate the 2nd term on RHS as follows:
\be
\bigg|\int_{\Om\setminus B_r(0)}\f{G(x,y)f_{\eps}(y)}{d^s(x)}dy\bigg|\leq C\int_{\Om\setminus B_r(0)}\f{|f_{\eps}(y)|}{|x-y|^{N-s}}dy=o_{\eps, r}(1),
\ee
where $o_{r, \eps }(1)$ denote the term going to $0$ as $r\to 0$ and $\eps \to 0.$ Note that we have used the fact that $|x-y|^{s-N}$ is  integrable in $\Om.$ 
Furthermore $\frac{G(x,.)}{d(x)^s}$ is continuous in
$\overline{\Om}\setminus\{x\}$, ( see \cite[Lemma 6.5]{CS}). Therefore from \eqref{nov-3-14}, we obtain  
 $$\lim_{\eps\to 0}\f{\phi_{\eps}(x)}{d^s(x)}=\f{G(x,0)}{d^s(x)}\lim_{r\to 0}\lim_{\eps\to 0}\int_{B_r(0)}f_{\eps}(y)dy.$$
Moreover, by Subclaim 2, 
$$\lim_{r\to 0}\lim_{\eps\to 0}\f{G(x,0)}{d^s(x)}\int_{B_r(0)}f_{\eps}(y)dy=-c_0\f{G(x,0)}{d^s(x)}.$$ Thus, it follows
\be\lab{nov-3-8}
\lim_{\eps\to 0}\f{\phi_{\eps}(x)}{d^s(x)} =-c_0\f{G(x,0)}{d^s(x)}.\ee
This proves the claim.

\vspace{2mm}

In order to complete the proof of Step 5, we apply the Pohozaev identity \eqref{poho-1} to $u_{\eps}$ and $v_{\eps}$.
$$\Ga(1+s)^2\int_{\pa\Om}\bigg(\f{u_{\eps}(x)}{d^s(x)}\bigg)^2(x\cdot\nu)dS=\eps\bigg[(N-2s)-\f{2N}{q+1}\bigg]\int_{\Om}u_{\eps}^{q+1}dx,$$
$$\Ga(1+s)^2\int_{\pa\Om}\bigg(\f{v_{\eps}(x)}{d^s(x)}\bigg)^2(x\cdot\nu)dS=\eps\bigg[(N-2s)-\f{2N}{q+1}\bigg]\int_{\Om}v_{\eps}^{q+1}dx.$$
Subtracting one from the other and multiplying by $\f{||u_{\eps}||^3_{L^{\infty}(\Om)}}{||u_{\eps}-v_{\eps}||_{L^{\infty}(\Om)}}$ in both sides yields,
\bea\lab{nov-4-1}
&\displaystyle\quad\quad\quad\Ga(1+s)^2\int_{\pa\Om}\f{||u_{\eps}||^2_{L^{\infty}(\Om)}(u_{\eps}-v_{\eps})}{||u_{\eps}-v_{\eps}||_{L^{\infty}(\Om)}d^s(x)}\f{(u_{\eps}+v_{\eps})||u_{\eps}||_{L^{\infty}(\Om)}}{d^s(x)}(x\cdot\nu)dS\\
&=\displaystyle\eps\bigg[(N-2s)-\f{2N}{q+1}\bigg](q+1)\int_{\Om}\f{||u_{\eps}||^3_{L^{\infty}(\Om)}}{||u_{\eps}-v_{\eps}||_{L^{\infty}(\Om)}}(u_{\eps}-v_{\eps})\int_0^1(tu_{\eps}+(1-t)v_{\eps})^q dt dx\no.
\eea 
By doing the change of variable $x=\ga_{\eps}y$,  RHS of \eqref{nov-4-1} reduces as
\bea
\text{RHS of \eqref{nov-4-1}}&=&\eps||u_{\eps}||^{q-p+2}\big[q(N-2s)-(N+2s)\big]\no\\
&\times&\int_{\Om_{\eps}}\psi_{\eps}(y)\bigg[\int_0^1\bigg(t\f{u_{\eps}(\ga_{\eps}y)}{||u_{\eps}||_{L^{\infty}(\Om)}}+(1-t)\f{v_{\eps}(\ga_{\eps}y)}{||v_{\eps}||_{L^{\infty}(\Om)}}\bigg)^q dt \bigg]dy.\no
\eea
Note that By Theorem \ref{main2}, $\lim_{\eps\to 0}\eps||u_{\eps}||^{q-p+2}\big[q(N-2s)-(N+2s)\big]=C_1$, for some constant $C_1>0$. Therefore, using  dominated convergence theorem via \eqref{nov-2-12} and \eqref{nov-3-4} , we obtain
 \bea
 \lim_{\eps\to 0}\ \text{RHS of \eqref{nov-4-1}}=C_1\int_{\Rn}\f{1-|x|^2}{(1+|x|^2)^\f{N-2s+2+q(N-2s)}{2}}dx.
 \eea
Applying the change of variable as in \eqref{nov-3-9}, it can be proved  that 
\bea\int_{\Rn}\f{1-|x|^2}{(1+|x|^2)^\f{N-2s+2+q(N-2s)}{2}}dx&=& \om_N\int_0^1\f{r^{N-1}(1-r^2)(1-r^{q(N-2s)-(N+2s)})}{(1+r^2)^{\f{(N-2s)(q+1)}{2}+1}}dr\no\\
&=&C_2,\no
\eea
where $C_2>0$ is a constant. Hence, \bea\lab{nov-3-10}
\lim_{\eps\to 0}\ [\text{RHS of \eqref{nov-4-1}}]>0.\eea
On the other hand, applying \eqref{nsd2} and \eqref{nov-3-8} to LHS via dominated convergence theorem, we get
\bea\lab{nov-3-11}
\lim_{\eps\to 0}\ [\text{LHS of \eqref{nov-4-1}}]&=& 2\Ga(1+s)^2\int_{\pa\Om}-c_0\f{G(x,0)}{d^s(x)} \f{\om_{N}c_{N,s}^{2^*-1}}{2}\frac{\Ga(\frac{N}{2})\Ga(s)}{\Gamma(\frac{N+2s}{2})}\f{G(x, 0)}{d^s(x)}(x\cdot\nu)dS\no\\
&=&  -\f{c_0\om_{N}c_{N,s}^{2^*-1}}{2}\frac{\Ga(\frac{N}{2})\Ga(s)\Ga(1+s)^2}{\Gamma(\frac{N+2s}{2})}\int_{\pa\Om}\bigg(\f{G(x,0)}{d^s(x)}\bigg)^2(x\cdot\nu)dS\no\\
&<&0.
\eea
Combining \eqref{nov-3-10} along with \eqref{nov-3-11} gives the contradiction. Hence $\al=0$ and step 5 follows.
 
 \vspace{2mm}
 
\noi{\bf Step 6}: Step 5 implies that $\psi\equiv 0$. Therefore, by Step 1, $\psi_{\eps}\to 0$ in $K$ for every compact set $K$ in $\Rn$ . Let $y_{\eps}\in\Rn$ such that
$$\psi_{\eps}(y_{\eps})=||\psi_{\eps}||_{L^{\infty}(\Om_{\eps})}.$$
Since by definition of $\psi_{\eps}$ it follows $||\psi_{\eps}||_{L^{\infty}(\Om_{\eps})}=1$, we get \be\lab{10-9-17-1}\psi_{\eps}(y_{\eps})=1.\ee This in turn implies $y_{\eps}\to\infty$ as $\eps\to 0$. On the other hand, \eqref{nov-2-10} yields that $\psi_{\eps}(y_{\eps})\to 0$. This contradicts \eqref{10-9-17-1}. Hence the  uniqueness result follows.
\hfill$\square$

\vspace{3mm}

\appendix
\numberwithin{equation}{section}

\appendix
\section{}

Define \be \label{hatF}{\hat F}(w)=  \displaystyle\frac{1}{2}\int_{\Rn}\int_{\Rn}\f{|w(x)-w(y)|^2}{|x-y|^{N+2s}} dxdy+  \frac{1}{q+1}\int w^{q+1} dx ,\ee
where $q>p\geq 2^*-1$.
For $\rho>0$, set
$$X_0(\rho\Om):=\{w\in H^s(\Rn): w=0\quad \text{in}\quad \Rn\setminus\rho\Om\},$$
$$N_{\rho}= \bigg\{ w\in X_0(\rho\Om)\cap L^{q+1}(\rho\Om):
 \  \int_{\rho\Om} w^{p+1}dx=1 \bigg\}.$$
Define $$S_{\rho}:=\inf_{w\in N_{\rho}} \hat{F}(w).$$

\begin{theorem}\label{A.1}
(i) If  $p=2^{*}-1$, then  $S_{\rho}\to \frac{\mathcal{S}}{2}$ as $\rho\to\infty$, where  $\mathcal{S}$ is as defined in \eqref{go}.

(ii) If  $p>2^{*}-1$, then $S_{\rho}\to \mathcal{K}$ as $\rho\to\infty$, where $\mathcal{K}$ is as defined in \eqref{a2}.
\end{theorem}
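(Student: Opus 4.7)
The plan is to show the two statements separately, each by combining a lower bound valid for all $\rho$ with a sequence of explicit competitors realizing the target value in the limit.

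\textbf{Lower bounds.} For any $w\in N_{\rho}$, extend $w$ by zero outside $\rho\Om$ to view it as an element of $D^{s,2}(\Rn)\cap L^{q+1}(\Rn)$ with $\int_{\Rn}w^{p+1}dx=1$. Because the constraint forces $(\int w^{p+1})^l=1$, the nonlocal functional $F$ in \eqref{a21} reduces to $\hat F$ on $N_{\rho}$, i.e.\ $\hat F(w)=F(w,\Rn)$. For part (ii) this gives $\hat F(w)\ge \mathcal{K}$ directly, so $S_{\rho}\ge \mathcal{K}$. For part (i) the Sobolev inequality \eqref{go} yields $\|w\|_{D^{s,2}(\Rn)}^2\ge \mathcal{S}(\int w^{2^*})^{2/2^*}=\mathcal{S}$, hence $\hat F(w)\ge \tfrac12 \|w\|_{D^{s,2}}^2\ge \mathcal{S}/2$ and $S_{\rho}\ge \mathcal{S}/2$.

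\textbf{Upper bound, part (ii).} Fix $\de>0$. Using that $C_{0}^{\infty}(\Rn)$ is dense in $D^{s,2}(\Rn)\cap L^{p+1}(\Rn)\cap L^{q+1}(\Rn)$, pick $\phi\in C_{0}^{\infty}(\Rn)$ with $\int \phi^{p+1}=1$ and $\hat F(\phi)<\mathcal{K}+\de$ (normalize if necessary using the scale invariance of $F$). Since $\rho\Om$ grows without bound, for $\rho$ large we can translate $\phi$ to a point $y_{\rho}$ so that the support of $\phi(\cdot-y_{\rho})$ lies inside $\rho\Om$. Translation preserves both $\hat F$ and the constraint, so $\phi(\cdot-y_{\rho})\in N_{\rho}$ and $S_{\rho}\le \mathcal{K}+\de$. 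Letting $\de\to 0$ we obtain $\limsup_{\rho\to\infty}S_{\rho}\le \mathcal{K}$, which combined with the lower bound gives $S_{\rho}\to \mathcal{K}$.

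\textbf{Upper bound, part (i).} The naive translation trick fails here because a fixed compactly supported $\phi$ with $\int \phi^{2^*}=1$ and $\|\phi\|_{D^{s,2}}^{2}$ close to $\mathcal{S}$ still has a strictly positive contribution $\tfrac{1}{q+1}\int \phi^{q+1}$ to $\hat F$. The key is to exploit the full scaling invariance available when $p=2^*-1$. For $\la>0$ set $\phi_{\la}(x)=\la^{(N-2s)/2}\phi(\la x)$. Then $\|\phi_{\la}\|_{D^{s,2}}^2=\|\phi\|_{D^{s,2}}^2$ and $\int \phi_{\la}^{2^*}=\int \phi^{2^*}=1$, while
\[
\int \phi_{\la}^{q+1}dx=\la^{\beta}\int \phi^{q+1}dx,\qquad \beta=\tfrac{(N-2s)(q+1)}{2}-N>0,
\]
the positivity of $\beta$ coming from $q+1>2^*$. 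The support of $\phi_{\la}$ scales as $\la^{-1}$, so I would choose $\la=\la(\rho)\to 0$ slowly, e.g.\ $\la(\rho)=\rho^{-1/2}$, so that $\la(\rho)^{-1}=o(\rho)$ and hence, after a translation, $\mathrm{supp}\,\phi_{\la(\rho)}\subset \rho\Om$. Then $\phi_{\la(\rho)}\in N_{\rho}$ and
\[
\hat F(\phi_{\la(\rho)})=\tfrac12 \|\phi\|_{D^{s,2}}^2+\tfrac{\la(\rho)^{\beta}}{q+1}\int \phi^{q+1}dx\longrightarrow \tfrac12\|\phi\|_{D^{s,2}}^2.
\]
Combining with $\|\phi\|_{D^{s,2}}^2<\mathcal{S}+\de$ from a density/minimizing-sequence choice of $\phi$ and sending $\de\to 0$ gives $\limsup_{\rho\to\infty}S_{\rho}\le \mathcal{S}/2$.

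\textbf{Main obstacle.} The only nontrivial point is the part (i) upper bound: the competitors must simultaneously (a) have essentially optimal Sobolev quotient, (b) have $L^{q+1}$-norm tending to zero, and (c) have support inside $\rho\Om$. Only the critical-exponent dilation trick reconciles (a) and (b), and the rate at which $\la(\rho)\to 0$ must be tuned against the rate at which $\rho\Om$ fills $\Rn$ to achieve (c). Once this balance is set, everything else is either Sobolev's inequality or a standard density approximation.
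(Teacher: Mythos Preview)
Your argument is correct and slightly cleaner than the paper's. For the lower bounds you observe that any $w\in N_{\rho}$, extended by zero, already lies in $D^{s,2}(\Rn)\cap L^{q+1}(\Rn)$ with the unit constraint, so the inequalities $S_{\rho}\ge \mathcal S/2$ (via Sobolev) and $S_{\rho}\ge \mathcal K$ follow immediately; the paper instead mollifies a near-minimizer $w_{\rho,\delta}$ before invoking the same inequalities, a step which is not strictly needed. For the upper bounds the paper works with the explicit extremals: it rescales the Sobolev bubble $U$ by $\sqrt{\rho}$ and multiplies by a cutoff $\phi(\cdot/\rho)$ supported in $\rho\Om$ (respectively, for part (ii), it cuts off an actual minimizer of $\mathcal K$), and then computes the Gagliardo seminorm of the product by splitting it into three terms $I_\rho^1,I_\rho^2,I_\rho^3$ and showing the cross terms vanish. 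Your approach bypasses this product computation entirely: you take a compactly supported near-extremal $\phi$ obtained by density, dilate it by $\lambda(\rho)=\rho^{-1/2}$ to kill the $L^{q+1}$ term while keeping the critical quantities invariant, and translate it into $\rho\Om$. What you gain is brevity; what the paper's approach gains is an explicit competitor built from the actual extremal, which is sometimes useful if one later needs quantitative rates (though not here).
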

\begin{proof}
{\bf Step $1$}: First we prove that $\lim_{\rho \to \infty}S_{\rho} \leq \frac{S}{2}. $ 
Let us consider the function $U(x)$ defined as in \eqref{ent-U}. We know that $\mathcal{S}$ is achieved by $U$ and $U$ is the unique ground state 
solution of \eqref{ent} with $\displaystyle\int_{\Rn}U^{2^*}(x)dx=1. $

Define $$\displaystyle U_{\rho}(x):=\rho^{-\frac{(N-2s)}{4}}U\left(\f{x}{\sqrt{\rho}}\right) \quad\text{and}\quad
\phi_{\rho}(x)=\phi\left(\f{x}{\rho}\right)$$ where $\phi\in C_0^{\infty}(\Rn), \ \text{supp}(\phi) \in \Om,\  \phi \equiv 1 $ in $\frac{\Om}{2}, \ 0 \leq \phi \leq 1,
\ |\na \phi| \leq \frac{2}{d}$, where $d=diam(\Om)$. It is easy to see that $U_{\rho}$ is also a solution of \eqref{ent}. 

Set $v_{\rho}(x):=U_{\rho}(x)\phi_\rho(x)$ and
$\hat v_{\rho}(x)=\f{v_\rho}{|v_{\rho}|_{L^{2^*}(\rho\Om)}}. $ Then $\hat v_{\rho} \in N_\rho$ and thus, 
\be\lab{sep-11-1}S_\rho\leq \hat{F}(\hat v_{\rho})\ee
Note that,
$$\int_{\rho\Om}v_{\rho}^{2^*}dx=\rho^{-\f{N}{2}}\int_{\rho\Om}U^{2^*}(\f{x}{\sqrt{\rho}})\phi^{2^*}(\f{x}{\rho})dx=\int_{\rho\Om}U^{2^*}(x)\phi^{2^*}(\f{x}{\sqrt{\rho}})dx.$$
Therefore,
\be\lab{i}
\lim_{\rho \to \infty}\int_{\rho\Om}v_{\rho}^{2^*}dx=\int_{\Rn} U^{2^*}(x)dx=1.
\ee
Similarly, 
\begin{align}\label{ii}
\lim_{\rho \to \infty}\int_{\rho\Om}\hat v_{\rho}^{q+1}dx=
 \lim_{\rho \to \infty}\f{\rho^\f{(N+2s)-q(N-2s)}{4}}{|v_{\rho}|^{q+1}_{L^{2^*}(\rho\Om)}}\int_{\Rn}U^{q+1}(x)\phi^{q+1}(\f{x}{\sqrt{\rho}})dx= 0,
\end{align}
as $q>\f{N+2s}{N-2s}$. Hence, from \eqref{sep-11-1},
\be\lab{sep-11-2}
\lim_{\rho \to \infty} S_{\rho}\leq\lim_{\rho \to \infty}\f{1}{2}\int_{\Rn}\int_{\Rn}\frac{|\hat v_\rho(x)-\hat v_\rho(y)|^2}{|x-y|^{N+2s}}dxdy=\lim_{\rho \to \infty}\f{1}{2}\int_{\Rn}\int_{\Rn}\frac{|v_\rho(x)-v_\rho(y)|^2}{|x-y|^{N+2s}}dxdy. 
\ee

Now,
\be\lab{sep-11-2'} \int_{\Rn}\int_{\Rn}\frac{|{v_\rho}(x)-{v_\rho}(y)|^2}{|x-y|^{N+2s}}dxdy= I_\rho^1+I_\rho^2+I_\rho^3, \ee
 where 
 $$I_\rho^1:=\int_{\Rn}\int_{\Rn}\frac{|U_\rho(x)-U_\rho(y)|^2}{|x-y|^{N+2s}}\phi_\rho^2(x)dydx, $$
 \begin{equation}
  I_\rho^2:=\int_{\Rn}\int_{\Rn}\frac{|\phi_\rho(x)-\phi_\rho(y)|^2}{|x-y|^{N+2s}}U_\rho^2(y)dxdy,\no\\
 \end{equation}
 \begin{equation}
   I_\rho^3:=\int_{\Rn}\int_{\Rn}\frac{(U_\rho(x)-U_\rho(y))(\phi_\rho(x)-\phi_\rho(y))U_\rho(y)\phi_\rho(x)}{|x-y|^{N+2s}}dxdy.\no\\
 \end{equation}
 
 A simple calculation yields 
 \bea\lab{sep-11-3}
 \lim_{\rho \to \infty} I_\rho^1&=&\lim_{\rho \to \infty} \int_{\Rn} \int_{\Rn}\frac{|U(x)-U(y)|^2}{|x-y|^{N+2s}}\phi^2(\f{x}{\sqrt{\rho}})dydx\no\\
 &=& \int_{\Rn} \int_{\Rn}\frac{|U(x)-U(y)|^2}{|x-y|^{N+2s}}dydx=\mathcal{S}.
\eea
Using change of variable, it is not difficult to see that
\be\lab{sep-11-4} I_\rho^2=\int_{\Rn} \int_{\Rn} F_{\rho}(x,y)dxdy, \quad\text{where}\quad F_{\rho}(x,y)=\frac{|\phi(\f{x}{\sqrt{\rho}})-\phi(\f{y}{\sqrt{\rho}})|^2U^2(x)}{|x-y|^{N+2s}}.
\ee
Clearly, $F_{\rho}(x,y)\to 0$ pointwise as $\rho\to\infty$. Using dominated convergence theorem, we aim to show that  $\lim_{\rho \to \infty}I_{\rho}^2=0 $. 
Let $$D_1:=\{(x,y) \in \Rn \times \Rn:|x-y| \leq 1\},$$
$$ D_2:=\{(x,y) \in \Rn \times \Rn:|x-y|> 1\}. $$
Thus, 
\be
I_\rho^2=\int_{D_1} F_\rho(x,y)dxdy+\int_{D_2}F_\rho(x,y)dxdy=:I_\rho^{2,1}+I_\rho^{2,2}\no\\
\ee
In $D_1$, we estimate $F_\rho(x,y)$ as follows:
\begin{eqnarray}
F_\rho(x,y)=\frac{|\phi(\f{x}{\sqrt{\rho}})-\phi(\f{y}{\sqrt{\rho}})|^2U^2(x)}{|x-y|^{N+2s}}&\leq& \frac{|\f{x}{\sqrt{\rho}}-\f{y}{\sqrt{\rho}}|^2\norm{\na \phi}_{L^{\infty}(\Rn)}U^2(x)}{|x-y|^{N+2s}}\no\\
 &\leq& \frac{1}{\rho}|x-y|^{2-(N+2s)}\norm{\na \phi}_{L^{\infty}(\Rn)}U^2(x)\no\\
 &\leq& |x-y|^{2-(N+2s)}\norm{\na \phi}_{L^{\infty}(\Rn)}U^2(x),\no 
\end{eqnarray}
for $\rho>1$.
Moreover,
\begin{align}
 &\int_{D_1} |x-y|^{2-(N+2s)}\norm{\na \phi}_{L^{\infty}(\Rn)}U^2(x)dydx\no\\
 &\leq\norm{\na \phi}_{L^{\infty}(\Rn)}\int_{x \in \Rn}U^2(x)\int_{y \in \Rn,\ |x-y| \leq 1}|x-y|^{2-(N+2s)}dydx\no\\
 &=\norm{\na \phi}_{L^{\infty}(\Rn)}\norm{U}^2_{L^2(\Rn)}Nw_N\int_0^1 r^{1-2s}dr <\infty.\no
\end{align}
Hence, by the dominated convergence theorem we see that
$\lim_{\rho \to \infty}I_{\rho}^{2,1}=0. $
On the other hand, in $D_2$ we estimate $F_{\rho}(x,y)$ as follows:
\be\lab{sep-11-5} F_\rho(x,y)\leq\frac{4\norm{\phi}_{L^{\infty}(\Rn)}U^2(x)}{|x-y|^{N+2s}}.\ee
Proceeding same way as above, we can show that RHS of \eqref{sep-11-5} is in $L^{\infty}(D_2)$. Hence, by the dominated convergence theorem we see that
$\lim_{\rho \to \infty}I_{\rho}^{2,2}=0$. Consequently,
\be\lab{sep-11-6}\lim_{\rho \to \infty}I_{\rho}^{2}=0.\ee
Using change of variable, we see that
\be\lab{sep-11-7}
I_{\rho}^3=\int_{\Rn}\int_{\Rn}H_{\rho}(x,y)dxdy,
\ee
where,$$ H_{\rho}(x,y)=\frac{|U(x)-U(y)||\phi(\f{x}{\sqrt{\rho}})-\phi(\f{y}{\sqrt{\rho}})||U(x)||\phi(\f{y}{\sqrt{\rho}})|}{|x-y|^{N+2s}}. $$

Clearly  $H_{\rho}(x,y)\to 0$ pointwise as $\rho\to\infty$. Moreover,
\begin{align}
 &|H_\rho(x,y)| \leq \frac{|U(x)-U(y)||\phi(\f{x}{\sqrt{\rho}})-\phi(\f{y}{\sqrt{\rho}})||U(x)||\phi(\f{y}{\sqrt{\rho}})|}{|x-y|^{N+2s}}\no\\
&\leq\frac{1}{2}\frac{|U(x)-U(y)|^2}{|x-y|^{N+2s}}+\frac{1}{2}\frac{|\phi(\f{x}{\sqrt{\rho}})-\phi(\f{y}{\sqrt{\rho}})|^2U^2(x)}{|x-y|^{N+2s}}\end{align}
The 1st term on RHS is in $L^1(\Rn\times\Rn)$ and 2nd term can be dominated by $L^1$ function as before. Hence by dominated convergence theorem, we have
\be\lab{sep-11-8}\lim_{\rho \to \infty}I_\rho^3(x,y)=\lim_{\rho \to \infty}\int_{\Rn} \int_{\Rn}H_\rho(x,y)dxdy=0.\ee
As a result, combining \eqref{sep-11-3}, \eqref{sep-11-6}, \eqref{sep-11-8}, along with \eqref{sep-11-2'} and \eqref{sep-11-2} we obtain
Hence we obtain that $\lim_{\rho\to\infty}S_{\rho} \leq \f{\mathcal{S}}{2}$.

{\bf Step $2$:} In this step we aim to show $\lim_{\rho\to\infty}S_{\rho} \geq \f{\mathcal{S}}{2}$.  let $\delta>0$ be arbitrary. 
As $S_{\rho}=\inf_{w \in N_\rho} \hat F(w), $  there exists $w_{\rho,\delta} \in N_\rho$ such that
\begin{equation}\label{ii}
 \hat F(w_{\rho,\delta})<S_{\rho}+\delta.
\end{equation}

Let $\eta(.)$ be the standard mollifier function, i.e, $\eta(x)=C\exp(\f{1}{|x|^2-1})$ if $|x|<1$ and $0$ otherwise. Set $\eta_{\sigma}(x)=\sigma^{-N}\eta(\f{x}{\sigma}). $

Define $w_{\rho,\delta}^\sigma:=w_{\rho,\delta}*\eta_{\sigma}$ and $v_{\rho,\delta}^{\sigma}=\frac{w_{\rho,\delta}^\sigma}{|w_{\rho,\delta}^\sigma|_{L^{2^*}(\Rn)} }. $

We note that $v_{\rho,\delta}^\sigma \in C_0^\infty(\Rn) \cap N$
where $$N:=\big\{w \in D^{s,2}(\Rn):  w \in L^{q+1}(\Rn), \int_{\Rn}w^{2^*}dx=1\big\} $$
and $D^{s,2}(\Rn)$ is completion of $C_0^\infty(\Rn)$ with the norm
$\displaystyle\left(\int_{\Rn}\int_{\Rn}\frac{|u(x)-u(y)|^2}{|x-y|^{N+2s}}dxdy\right)^\f{1}{2}$.

Note that $v_{\rho,\delta}^\sigma \to w_{\rho,\delta}$ in $D^{s,2}(\Rn)\cap L^{q+1}(\Rn)$ as $\sigma \to 0. $

Hence, we have, $$\f{\mathcal{S}}{2} \leq \hat F(v_{\rho,\delta}^\sigma) \to \hat F(w_{\rho,\delta}). $$
Combining this with \eqref{ii} we have, $\f{\mathcal{S}}{2} < S_{\rho}+ \delta. $
As $\delta>0$ is arbitrary we have, $\f{\mathcal{S}}{2} \leq S_{\rho}. $
This implies, $\f{\mathcal{S}}{2} \leq \lim_{\rho \to \infty}S_{\rho}. $ This completes the proof. 

\noi{\bf Second part}:

Let $w \in D^{s,2}(\Rn)\cap L^{q+1}(\Rn)$ be a minimizer for $\mathcal{K}$ (existence is guaranteed by \cite[Theorem 1.4]{BM}) with $\displaystyle\int_{\Rn}w^{p+1}dx=1$. 

Define $\phi_{\rho}$ as in step $1. $
Set $w_{\rho}:=w\phi_{\rho}$ and $\hat w_{\rho}=\f{w_\rho}{|w_\rho|_{L^{p+1}(\Rn)}}. $
Then $\hat w_{\rho} \in N_{\rho}$. Consequently, $S_{\rho} \leq \hat F(\hat w_\rho). $ Proceeding before as in step $1, $ we can show that
$\hat F(\hat w_\rho) \to \mathcal{K}$ as $\rho \to \infty. $ Hence, $\lim_{\rho \to \infty}S_{\rho} \leq \mathcal{K}$. To get the other sided inequality, we use the same idea as first part.  Hence, the result follows. 
\end{proof}
 
\vspace{3mm}

\noi{\bf Acknowledgement}: The first author is supported by the INSPIRE research grant
DST/INSPIRE 04/2013/000152 and the second author is supported by the
NBHM grant 2/39(12)/2014/RD-II. The third author acknowledges funding from LMAP UMR CNRS 5142, Universit\'e de Pau et des Pays de l'Adour.

\end{document}